\def\be {\begin{equation}}
\def\ee {\end{equation}}
\def\ba {\begin{eqnarray}}
\def\ea {\end{eqnarray}}
\newcommand{\dss}{\displaystyle}
\newcommand{\tss}{\textstyle}
\newtheorem{theorem}{Theorem}
\newdefinition{rmk}{Remark}
\journal{J. Differential Equation}
\begin{document}

\begin{frontmatter}



\title{Bifurcation of small limit cycles\\
in cubic integrable systems  using higher-order analysis}  


\author{Yun Tian$^{a}$, Pei Yu$^{b,} 
$\footnote{Corresponding author.\\
\hspace*{0.25in}E-mail addresses: ytian22@shnu.edu.cn (Y. Tian), 
pyu@uwo.ca (P. Yu)}}


\address{$^a$Department of Mathematics, Shanghai Normal University, 
Shanghai, 200234, P.~R. China \\
$^b$Department of Applied Mathematics, Western University, 
London, Ontario, Canada N6A 5B7}

\begin{abstract}
In this paper, we present a method of higher-order analysis on bifurcation 
of small limit cycles around an elementary center of integrable systems 
under perturbations. This method is equivalent to higher-order Melinikov 
function approach used for studying bifurcation of limit cycles 
around a center but simpler.
Attention is focused on planar cubic polynomial systems 
and particularly it is shown that the system studied
by H. \.Zo{\l}\c{a}dek in the article 
{\it Eleven small limit cycles in a cubic vector field} 
(Nonlinearity 8, 843--860, 1995) can indeed 
have eleven limit cycles under 
perturbations at least up to $7$th order. Moreover, 
the pattern of numbers of limit cycles produced near 
the center is discussed up to $39$th-order perturbations,
and no more than eleven limit cycles are found.

\end{abstract}

\begin{keyword}
Bifurcation of limit cycles; Higher-order analysis; Darboux integral; 
Focus value.



\vspace{0.10in} 
\MSC 34C07 \sep 34C23

\end{keyword}

\end{frontmatter}



\section{Introduction}

Bifurcation theory of limit cycles is important for both theoretical 
development of qualitative analysis
and applications in solving real problems. It is closely related 
to the well-known Hilbert's 16th problem~\cite{Hilbert1900},
whose second part asks for the upper bound, 
called Hilbert number $ H(n) $, on the number of limit cycles 
that the following system,
\begin{eqnarray}\label{Eqn1}
\frac{dx}{dt} = P_n(x,y), \qquad \frac{dy}{dt} = Q_n(x,y),
\end{eqnarray}
can have, where $ P_n(x,y) $ and $ Q_n(x,y) $
represent $ n^{\rm th}$-degree polynomials in $ x$ and $ y$.
This problem has motivated many mathematicians 
and researchers in other disciplines to 
develop mathematical theories and methodologies in the areas of 
differential equations and dynamical systems. 
However, this problem has not been completely solved even for 
quadratic systems since Hilbert proposed the problem in 
the Second Congress of World Mathematicians in 1900. 
The maximal number of limit cycles obtained for some quadratic systems is 
$4$~\cite{Shi1980,ChenWang1979}. 
However, whether $ H(2)=4 $ is still open.
For cubic polynomial systems, many
results have been obtained on the lower bound of the number of limit cycles.
So far, the best result for cubic systems is
$ H(3) \ge 13$~\cite{LiLiu2010,LLY09}. Note that the $13$ limit cycles
obtained in~\cite{LiLiu2010,LLY09}
are distributed around several singular points.
 
When the problem is restricted to consider the maximum number of 
small-amplitude limit cycles, denoted by $M(n)$, bifurcating from a 
focus or a center in system (\ref{Eqn1}),
one of the best-known results is $M(2)=3$, which was obtained by
Bautin in 1952~\cite{Bautin1952}.
For $n=3$, a number of results in this research direction have been obtained. 
So far the best result for the number of small limit cycles around a 
focus is $9$~\cite{YC2009,Lloyd2012,CCMYZ2013}, 
and that around a center is 
$12$~\cite{YuTian2014}. 

One of powerful tools used for analyzing local bifurcation of limit cycles 
around a focus or a center
is normal form theory (e.g., see~\cite{GuckenheimerHolmes1993,ChowLiWang1994,
Kuznetsov1998,GazorYu2012}). 
Suppose system \eqref{Eqn1} has an elementary focus
or an elementary center at the origin.
With the computation methods using computer algebra systems
(e.g., see~\cite{HanYu2012, Yu1998,YuLeung2003,TianYu2013,TianYu2014}),
we obtain the normal form expressed in polar coordinates as
\begin{equation}\label{Eqn2}
\begin{array}{ll}  
\displaystyle\frac{dr}{dt} = r \, \big( v_0 + v_1 \, r^2 + v_2 \, r^4 
+ \cdots + v_k \, r^{2k} + \cdots \big), 
\\[1.0ex]
\displaystyle\frac{d \theta}{dt} 
= \omega_c + \tau_0 + \tau_1 \, r^2 + \tau_2 \, r^4 + \cdots 
+ \tau_k \, r^{2k} + \cdots ,  
\end{array} 
\end{equation}
where $r$ and $\theta $ represent the amplitude and phase of motion,
respectively.  $v_k \, (k=0,1,2, \cdots) $ is called the $k$th-order
focus value. $v_0$ and $\tau_0$ are obtained from linear analysis.
The first equation of (\ref{Eqn2}) can be used for studying
bifurcation and stability of limit cycles, while the
second equation can be used to determine the frequency of the bifurcating
periodic motion. Moreover, the coefficients $\tau_j$ can be used to
determine the order or critical periods of a center
(when $ v_j=0,\, j \ge 0$).

A particular attention has been paid to 
near-integrable polynomial systems, described in the form of 
\begin{equation} 
\begin{array}{ll}\label{Eqn3}
\displaystyle\frac{dx}{dt} = M^{-1}(x,y,\mu)H_y(x,y,\mu) 
+ \varepsilon \, p(x,y,\varepsilon,\delta), \\[1.0ex] 
\displaystyle\frac{dy}{dt} = -M^{-1}(x,y,\mu) H_x(x,y,\mu) 
+ \varepsilon \, q(x,y,\varepsilon,\delta),
\end{array}
\end{equation}
where $0 < \varepsilon \ll 1 $,
$\mu$ and $\delta$ are vector parameters;
$ H(x,y,\mu)$ is an analytic function in $x$, $y$ and $\mu$;
$p (x,y,\varepsilon,\delta) $ and $ q(x,y,\varepsilon,\delta) $ are
polynomials in $ x $ and $ y$,
and analytic in $\delta$ and $\varepsilon$.
$M(x,y,\mu)$ is an integrating factor of the unperturbed 
system \eqref{Eqn3}$|_{\varepsilon=0}$.

Suppose the unperturbed system \eqref{Eqn3}$|_{\varepsilon=0}$ 
has an elementary center.
Then, considering limit cycles bifurcation in system \eqref{Eqn3} 
around the center, we may use the normal form theory to 
obtain the first equation of \eqref{Eqn2} as follows: 
\begin{equation}\label{Eqn4} 
\begin{array}{rl} 
\displaystyle\frac{dr}{dt} 
= r \left[v_0(\varepsilon) + v_1(\varepsilon)r^2 
+ v_2(\varepsilon)r^4 + \cdots +v_i(\varepsilon)r^{2i}+\cdots \right],  
\end{array} 
\end{equation} 
where 
\begin{equation*}
v_i(\varepsilon) = \sum_{k=1}^\infty \varepsilon^k V_{ik}, 
\quad i=0,1,2, \dots,  
\end{equation*} 
in which $V_{ik}$ denotes the $i$th $\varepsilon^k$-order focus value, 
and will be used throughout this paper. 
Note that $v_i(\varepsilon)=O(\varepsilon)$
since the unperturbed system \eqref{Eqn3}$|_{\varepsilon=0}$ 
is an integrable system.
Further, because system \eqref{Eqn3} is analytic in $\varepsilon$,
we can rearrange the terms in \eqref{Eqn4}, and obtain 
\begin{equation}\label{Eqn5}
\frac{dr}{dt} = V_1(r)\, \varepsilon + V_2(r)\, \varepsilon^2 + \cdots 
+ V_k(r)\, \varepsilon^{k} + \cdots ,  
\end{equation} 
where 
\begin{equation}  \label{Eqn6} 
V_k(r) = \sum_{i=0}^\infty V_{ik}\, r^{2i+1}, \quad k=1,2, \dots. 
\end{equation}

Similarly, for the normal form of system \eqref{Eqn3} 
we have the $\theta$ differential equation, given by 
\begin{equation*}
\frac{d\theta}{dt} = T_0(r) + O(\varepsilon),
\end{equation*} 
with $T_0(0)\neq0$, and thus 
\begin{equation}\label{Eqn7}
\frac{dr}{d\theta}=\frac{V_1(r)\, \varepsilon + V_2(r)\, \varepsilon^2 
+ \cdots + V_k(r)\, \varepsilon^{k} + \cdots} {T_0(r) + O(\varepsilon)}.
\end{equation}
Assume the solution $r(\theta,\rho,\varepsilon)$ of \eqref{Eqn7}, satisfying
the initial condition $r(0,\rho,\varepsilon)=\rho$, is given 
in the form of 
\begin{equation*}
r(\theta,\rho,\varepsilon)=r_0(\theta,\rho)+r_1(\theta,\rho)\varepsilon
+r_2(\theta,\rho)\varepsilon^2 + \cdots 
+ r_k(\theta,\rho)\varepsilon^k + \cdots,
\end{equation*}
with $0<\rho\ll1$. Then, $r_0(0,\rho)=\rho$ and $r_k(0,\rho)=0$, 
for $k\ge1$.

If there exists a positive integer $K$ such that $V_k(r)\equiv0$, $1\le k< K$,
and $V_K(r)\not\equiv0$, then it follows from \eqref{Eqn7} that 
$$r_0(\theta,\rho)=\rho,\quad r_k(\theta,\rho)=0,
\quad 1\le k< K,\quad \mbox{and}
\quad r_{K}(\theta,\rho)=\frac{V_K(\rho)}{T_0(\rho)}\theta.$$
Thus, the displacement function $d(\rho)$ of system \eqref{Eqn7} 
can be written as
\begin{equation}\label{Eqn8}
d(\rho)=r(2\pi,\rho,\varepsilon)-\rho=2\pi\frac{V_K(\rho)}{T_0(\rho)}\varepsilon^K+O(\varepsilon^{K+1}).
\end{equation}
Therefore, if we want to determine the number of small-amplitude limit cycles 
bifurcating from the center in system \eqref{Eqn3}, we only need to study 
the number of isolated zeros of $V_K(\rho)$ for $0<\rho\ll 1$, 
and have to obtain the expression of the first non-zero coefficient
$V_K(r)$ in \eqref{Eqn5} by computing $V_{iK}$, for $i\ge0$.  

The above discussions show that the basic idea of using focus values 
is actually the same as that of the Melnikov function method.
Using $H(x,y)=h$ to parameterize the section (i.e. the 
Poincar\'{e} map),
we obtain the displacement function of \eqref{Eqn3}, given by
\begin{equation}\label{Eqn9}
d(h)= M_1(h)\varepsilon + M_2(h)\varepsilon^2 + \cdots +
M_k(h)\varepsilon^k + \cdots,
\end{equation}
where
\begin{equation}\label{Eqn10}
M_1(h) \!= \!\! \dss\oint_{H(x,y,\mu)=h} \hspace*{-0.30in} M(x,y,\mu) 
\big[ q(x,y,0,\delta) \, dx - p(x,y,0,\delta) \, dy \big], 
\end{equation}
evaluated along closed orbits $H(x,y,\mu)=h$ for $h\in(h_1,h_2)$.
Then, we can study the first non-zero Melnikov function
$M_k(h)$ in \eqref{Eqn9} to determine the number of 
limit cycles in system \eqref{Eqn3}. 
In the following, we remark on the comparison of the Melnikov function 
method and the method of normal forms (or focus values). 

\begin{rmk}\label{Rem1}

\begin{enumerate}
\item[{\rm (1)}]
Let $H=h$,  $0<h-h_1\ll1$ define closed orbits around 
the center of system \eqref{Eqn5}$|_{\varepsilon=0}$.
It is easy to see that for any integer $K\ge1$,
equation \eqref{Eqn8} holds
if and only if
$M_k(h)\equiv0$, $1\le k<K$ and $M_K(h)\not\equiv0$ in \eqref{Eqn9}.
Moreover, $V_K(\rho)$ for $0<\rho\ll1$ and $M_K(h)$ for $0<h-h_1 \ll 1$ 
have the same maximum number of isolated zeros. 

\item[{\rm (2)}]
As we can see, $V_k(r)$ can be obtained by the computation of normal forms 
or focus values.

\item[{\rm (3)}]
In particular, when the original system is not a Hamiltonian system but
an integrable system, then even computing the coefficients of the
first-order Melnikov function is much more involved than the computation
of using the method of normal forms. 

\item[{\rm (4)}]
However, the method of normal forms (or focus values) is restricted to Hopf and 
generalized Hopf bifurcations,
while the Melnikov function method can also be applied to study bifurcation
of limit cycles from homoclinic/heteroclinic loops or any closed orbits.
\end{enumerate}
\end{rmk}

When we apply the method of normal form computation,
some unnecessary perturbation parameters are involved 
in the computation of high-order focus values,
which could be extremely computation demanding (in both time and memory), 
and makes it much more difficult to solve the problem. 
Meanwhile, before we use the first non-zero coefficient 
$V_K(r)$ in \eqref{Eqn5} to find limit cycles,
we need to prove $V_k(r)\equiv0$, $1\le k < K$.
The unnecessary parameters involved could greatly 
increase the difficulty of proving that.

In this paper, without loss of limit cycles,
we introduce a linear transformation to eliminate unnecessary 
parameters from system \eqref{Eqn3}. 
With less parameters in \eqref{Eqn3}, we can use the approximation of 
first integrals to prove $V_k(r)\equiv0$.
The idea will be illuminated in Section 2.

We will apply our method to study the bifurcation of small-amplitude limit 
cycles in the system 
\begin{equation}\label{Eqn11}
\begin{array}{ll}  
\displaystyle\frac{dx}{dt} = a + \dss\frac{5}{2}\, x + xy + x^3 
+ \sum_{k=1}^n \varepsilon^k p_k(x,y), \\[-1.0ex] 
\displaystyle\frac{dy}{dt} = -2 a x + 2 y - 3 x^2 + 4 y^2 - a x^3 + 6 x^2 y 
+  \dss\sum_{k=1}^n \varepsilon^k q_k(x,y),  
\end{array} 
\end{equation} 
where 
\begin{equation}\label{Eqn12} 
p_k(x,y) = a_{00k} + \sum_{i+j=1}^3 a_{ijk}\, x^i y^j, \qquad 
q_k(x,y) = b_{00k} + \sum_{i+j=1}^3 b_{ijk}\, x^i y^j, 
\end{equation} 
in which $a_{ijk}$ and $b_{ijk}$ are $\varepsilon^k$th-order coefficients 
(parameters). 
The unperturbed system \eqref{Eqn11}$|_{\varepsilon=0}$ has a rational 
Darboux integral \cite{Zoladek1995}, 
\begin{equation}\label{Eqn13}
H_0=\frac{f_1^5}{f_2^4}=\frac{(x^4+4x^2+4y)^5}{(x^5+5x^3+5xy+5x/2+a)^4},
\end{equation}
with the integrating factor $M=20f_1^4f_2^{-5}$.
It can be shown that for $a<-2^{5/4}$, system \eqref{Eqn11}$|_{\varepsilon=0}$
has a center at ${\rm E_0} =(- \frac{a}{2},- \frac{a^2+2}{4})$. 
The system \eqref{Eqn11}$|_{\varepsilon=0}$ was proposed 
in \cite{Zoladek1995}, and it was claimed that this system 
could have $11$ limit 
cycles around the center by studying the second-order Melnikov 
function. Later, Yu and Han applied the normal form computation method and 
got only $9$ limit cycles around ${\rm E_0}$~\cite{YuHan2011}
by analyzing the $\varepsilon$- and $\varepsilon^2$-order focus values. 
Recently, it has been shown \cite{TianYu2016} 
that errors are made in \cite{Zoladek1995} for choosing 12 integrals as 
the basis of the linear space of corresponding Melnikov functions of system 
\eqref{Eqn11}$|_{\varepsilon=0}$.
In fact, among the 12 chosen integrals, 
two of them can be expressed as linear combinations of the other ten integrals,
and therefore only $9$ limit cycles can exist, agreeing with that shown in~\cite{YuHan2011}.   

The rest of the paper is organized as follows. 
In the next section, we consider system \eqref{Eqn3}, and 
construct a transformation to reduce the number of perturbation 
parameters, which greatly simplifies the analysis in the following section. 
Section 3 is devoted to the computation of higher 
$\varepsilon^k$-order focus values and the existence of 
$11$ limit cycles in system \eqref{Eqn11}, which needs 
computing at least $\varepsilon^7$-order focus values.
Finally, conclusion is drawn in Section 4.

\section{Preliminaries}

The method of focus values (or normal forms) is one of important and powerful 
tools for the study of small-amplitude limit cycles generated from Hopf 
bifurcation. In general, a sufficient number of focus values would be needed 
if one wants to find more small-amplitude limit cycles.
One main challenge is that the computation of focus values becomes
more and more difficult as the order of focus values goes up.
That is why computer algebra systems such as Maple and Mathematica have 
been used for computing the focus values 
to improve the computational efficiency (e.g. see~\cite{TianYu2013,TianYu2014}).
Another approach is 
to eliminate certain parameters from the system, 
which is the method we shall develop here for near-integrable systems.

In most studies of near-integrable systems,
full perturbations like those polynomials $p(x,y,\varepsilon,\delta)$ 
and $q(x,y,\varepsilon,\delta)$ given in system \eqref{Eqn3} are considered. 
The parameter vector $\delta$ usually represents the coefficients
in $p$ and $q$. When normal forms are used to study small limit cycles,
it is easy to get and solve the focus values of $\varepsilon$ order
(coefficients in $V_1(r)$), because they are linear functions of the 
system parameters, namely the coefficients in $p(x,y,0,\delta)$ 
and $q(x,y,0,\delta)$.
For the $\varepsilon^k$-order focus values (coefficients in $V_k(r)$), 
more parameters would be involved in the computation.
One can observe that some parameters are not necessary 
for obtaining the maximum number of limit cycles, 
and they only increase the difficulty in finding limit cycles.

When  the first $n$ functions $V_k(r)$ in \eqref{Eqn5}, 
$1\le k\le n$
are applied to studying 
bifurcation of limit cycles, in order to 
remove unnecessary parameters 
without reducing the number of limit cycles,  
we may use the following transformation: 
\begin{equation}\label{Eqn14}
\left\{\begin{split}
&x\rightarrow x+e_{1}(\varepsilon)x+e_{2}(\varepsilon)y+e_{3}(\varepsilon),\\
&y\rightarrow y+e_{4}(\varepsilon)x+e_{5}(\varepsilon)y+e_{6}(\varepsilon),\\
&t\rightarrow t+e_{7}(\varepsilon)t,\\
&\mu\rightarrow \mu+e_8(\varepsilon),
\end{split}
\right.
\end{equation}
where   
$$
e_{i}(\varepsilon)=e_{i1}\varepsilon+e_{i2}\varepsilon^2+\cdots 
+e_{in}\varepsilon^n,\quad i=1,\cdots,8.
$$
Note that \eqref{Eqn14}$|_{\varepsilon=0}$ is an identity map.
Thus, \eqref{Eqn14} keeps the unperturbed system of \eqref{Eqn3} unchanged.
Furthermore, the new system obtained by using \eqref{Eqn14} can
be still written in the same form of \eqref{Eqn3}.
So we only need to find proper $e_{i}(\varepsilon)$'s to get 
simpler perturbation functions without loss of generality.

To illustrate how to obtain $e_i(\varepsilon)$, 
we take system \eqref{Eqn11} as an example.
The coefficients $a_{ijk}$ and $b_{ijk}$ in \eqref{Eqn11} are the parameters.
Substituting the transformation \eqref{Eqn14} 
into system \eqref{Eqn11} yields
\begin{equation}\label{Eqn15}
\begin{array}{ll}  
\displaystyle\frac{dx}{dt} = a + \dss\frac{5}{2}\, x + xy + x^3 
+ \sum_{k=1}^n \varepsilon^k \tilde p_k(x,y)+o(\varepsilon^{n}), \\[-1.0ex] 
\displaystyle\frac{dy}{dt} = -2 a x + 2 y - 3 x^2 + 4 y^2 - a x^3 + 6 x^2 y 
+  \dss\sum_{k=1}^n \varepsilon^k \tilde q_k(x,y) +o(\varepsilon^{n}),  
\end{array} 
\end{equation} 
where
\begin{equation}\label{Eqn16} 
\tilde p_k(x,y) 
= \tilde a_{00k} + \sum_{i+j=1}^3 \tilde a_{ijk}\, x^i y^j, \qquad 
\tilde q_k(x,y) 
= \tilde b_{00k} + \sum_{i+j=1}^3 \tilde b_{ijk}\, x^i y^j.
\end{equation}

Obviously, the coefficients $\tilde a_{ijk}$ and $\tilde b_{ijk}$ 
in \eqref{Eqn16} are linear in $e_{mk}$, $m=1,\ldots,8$.
Let $E_k=(e_{1k},e_{2k},\cdots,e_{8k})^T$.
For any $1\le k\le n$, $\tilde a_{ijk}$ and 
$\tilde b_{ijk}$ can be written in the form of 
$$
\tilde a_{ijk} = A_{ij}E_k +\eta_{ijk},\quad
\tilde b_{ijk} = B_{ij}E_k +\zeta_{ijk},
$$
where $A_{ij}$ and $B_{ij}$ are $1\times8$ matrices, 
and $\eta_{ijk}$ and $\zeta_{ijk}$, given by 
\begin{equation}\label{Eqn17}
\begin{split}
&\eta_{ijk} = \eta_{ijk}(E_1,\cdots,E_{k-1},
a_{ml1},\cdots,a_{mlk}, b_{ml1},\cdots,b_{mlk}),\\
&\zeta_{ijk} = \zeta_{ijk}(E_1,\cdots,E_{k-1},
a_{ml1},\cdots,a_{mlk}, b_{ml1},\cdots,b_{mlk}) ,
\end{split}
\end{equation}
are polynomials in $e_{ml}$, $1\le l  \le k-1$, 
and the coefficients in the perturbation functions \eqref{Eqn12}. 

Note that $A_{ij}$ and $B_{ij}$ are not dependent on $k$.
We hope that we can find some proper values for $e_{ik}$ 
to make some of the coefficients 
$\tilde a_{ijk}$ and $\tilde b_{ijk}$ vanish or satisfy some conditions,
so that the computation of the focus values would become easier.
For instance, we can choose for $1\le k\le n$, 
\begin{equation}\label{Eqn18}
\begin{array}{ll}
&\tilde a_{10k}= \tilde a_{01k}= \tilde a_{20k}= 
\tilde a_{11k}= \tilde a_{02k}= \tilde a_{30k}= 0,\\[1ex]
\hspace*{-0.74in} \mbox{and}\qquad \qquad & 
\tilde a_{p_k}\triangleq \tilde p_k(-\frac{a}{2},-\frac{a^2+4}{4})=0,\quad
\tilde a_{q_k}\triangleq \tilde q_k(-\frac{a}{2},-\frac{a^2+4}{4})=0.
\end{array}
\end{equation}
The last two equations in \eqref{Eqn18} keep the equilibrium 
of system \eqref{Eqn11} in a neighborhood of 
$\rm E_0$ with radius $o(\varepsilon^n)$.
A direct computation yields 
\begin{equation}\label{Eqn19} 
\begin{array}{rl}
\tilde a_{10k} =&\!\!\! 2ae_{2k}+e_{6k}+\frac{5}{2}e_{7k}+\eta_{10k},\quad
\tilde a_{01k} = \frac{1}{2}e_{2k}+e_{3k}+\eta_{01k},\\[0.5ex]
\tilde a_{20k} =&\!\!\! 3e_{2k}+3e_{3k}+e_{4k}+\eta_{20k},\quad
\tilde a_{11k} = e_{5k}+e_{7k}+\eta_{11k},\\[0.5ex]
\tilde a_{02k} =&\!\!\! -3e_{2k}+\eta_{02k},\quad
\tilde a_{30k} = 2e_{1k}+ae_{1k}+e_{7k}+\eta_{30k},\\[1ex]
\tilde a_{p_k} =&\!\!\!  -\frac{1}{4}a(4+a^2)e_{1k} 
- \frac{1}{8}(4+a^2)(2+a^2)e_{2k} + \frac{1}{4}(4+a^2)e_{3k}\\[1ex]
         &\!\!\!+\frac{1}{4}a^2e_{4k} + \frac{1}{8}a(2+a^2)e_{5k} 
- \frac{1}{2}ae_{6k} + e_{8k} +\tilde\eta_{k} ,\\[1ex]
\tilde a_{q_k} =&\!\!\! -\frac{1}{8}a^2(16+3 a^2)e_{1k} 
- \frac{1}{16}a(16+3 a^2)(2+a^2)e_{2k} \\[1.0ex] 
         &\!\!\!  + \frac{1}{4}a(16+3a^2)e_{3k}
+\frac{1}{4}a(4+a^2)e_{4k} + \frac{1}{8}(4+a^2)(2+a^2)e_{5k} \\[1ex]
&\!\!\!  -\frac{1}{4}(4+a^2)e_{6k}
+ \frac{1}{8}a(a^2+8)e_{8k}+\tilde\zeta_{k},\\
\end{array}
\end{equation}
where $\tilde\eta_{k}$ and $\tilde\zeta_{k}$ are also functions 
in $\eta_{ijl}$ and $\zeta_{ijl}$ with $1\le l  \le k-1$, 
respectively.

Because
$$
\det\left[\frac{\partial(\tilde a_{10k},\tilde a_{01k},\tilde a_{20k},
\tilde a_{11k}, \tilde a_{02k},\tilde a_{30k},\tilde a_{p_k},\tilde a_{q_k})}
{\partial(e_{1k},e_{2k},e_{3k},e_{4k},e_{5k},e_{6k},e_{7k},e_{8k})}\right]
=\frac{3}{4}(32-a^4)<0$$
for $a<-2^{-5/4}$,
we can solve \eqref{Eqn19} for $e_{mk}$ to obtain 
$$
e_{mk}=e_{mk}(\eta_{10k},\eta_{01k},\eta_{20k},\eta_{11k},\eta_{02k},
\eta_{30k},\tilde\eta_k,\tilde\zeta_{k}),\quad 
1\le m\le 8,
$$
which can be rewritten by using \eqref{Eqn17} as
$$
e_{mk}=\tilde e_{mk}(E_1,\cdots,E_{k-1},
a_{ij1},\cdots,a_{ijk}, b_{ij1},\cdots,b_{ijk}).
$$
Note that $e_{m1}$ only depends on $a_{ij1}$ and $b_{ij1}$.
Therefore, for all $1\le m\le 8$, $1\le k\le n$, $e_{mk}$ can 
be expressed as a polynomial
in $a_{ijl}$ and $b_{ijl}$, $1\le l\le k$.
In other words, \eqref{Eqn18} has solutions for all $1\le k\le n$.

Thus, without loss of generality, we assume that \eqref{Eqn12}
takes the following form, 
\begin{equation}\label{Eqn20}
\begin{array}{ll}  
p_k(x,y) = \!\!\!\! & a_{00k} + 
a_{21k} x^2 y + a_{12k} x y^2 + a_{03k} y^3, \\[0.5ex]   
q_k(x,y) = \!\!\!\! & b_{00k} 
+ b_{10k} x   + b_{01k} y
          +b_{20k} x^2 + b_{11k} x y + b_{02k} y^2 \\[0.5ex] 
\!\!\!\!& +b_{30k} x^3 + b_{21k} x^2 y + b_{12k} x y^2 + b_{03k} y^3,  
\end{array} 
\end{equation} 
with
\begin{equation}\label{Eqn21}
\begin{array}{ll}  
a_{00k}= \!\!\!\! & \textstyle\frac{1}{64} (a^2+2)
\big[ (a^2+2)^2\, a_{03k} + 2 a (a^2+2)\, a_{12k} +4 a^2 a_{21k} \big],  
\\[1.0ex] 
b_{00k} = \!\!\!\! & \textstyle\frac{1}{64}
\big\{ 8 a^3 b_{30k}
       \!+\! 16 a (2 b_{10k} \!-\! a b_{20k})
       \!+\! 4 (a^2 \!+\! 2) (4 b_{01k} \!-\! 2 a b_{11k} 
\!+\! a^2 b_{21k}) \\[1.0ex] 
\!\!\!\! & \quad \ 
-\,(a^2+2)^2 \big[ 4 b_{02k} -2 a b_{12k} -(a^2+2) b_{03k} \big]
\big\}. 
\end{array}
\end{equation}

As mentioned in Section 1,
to find limit cycles around $\rm{E_0}$ in system \eqref{Eqn11},
we apply normal form theory to compute the focus values and then solve 
the multivariate polynomial equations based on the focus values. 
Particularly, we have
\begin{equation}\label{Eqn22}
\begin{array}{ll}  
b_{01k} = \!\!\!\! & \textstyle\frac{1}{16} 
\big[ 4 a  (2 b_{11k}-a b_{21k}) - (a^2+2)^2 (a_{12k} +3 b_{03k}) 
\\[1.0ex] 
\!\!\!\! & + \,4 (a^2+2) (2 b_{02k} -a a_{21k} -a b_{12k}) \big],  
\end{array}
\end{equation} 
solved from the zeroth-order focus value $V_{0k}=0$, where  
\begin{equation}\label{Eqn23}
\begin{array}{rl}  
V_{0k} & \!\!\!\! = \tss\frac{1}{32} \big\{ 
16\, b_{01k} - 4 a  (2 b_{11k}-a b_{21k}) + (a^2+2)^2 (a_{12k} +3 b_{03k})  
\\ [1.0ex] 
& \!\!\!\!  \qquad 
-\, 4 (a^2+2) (2 b_{02k} -a a_{21k} -a b_{12k}) \big]. 
\end{array}
\end{equation} 
Higher-order focus values are relatively complex, and we shall
study them in Section 3. 

When we want to use focus values $V_{iK}$ in $V_K(r)$, $i=0,1,2,\cdots$,
to study limit cycles, we first need to show $V_k(r)\equiv0$, 
$1\le k<K$, or 
$\frac{dr}{dt}  = O(\varepsilon^{K})$ in \eqref{Eqn5}. In order to prove this, 
we use the approximation of first integrals, and claim that 
if there exists an analytic function $H_K(x,y,\varepsilon)$ such that
\begin{equation}\label{Eqn24} 
(M^{-1} H_y+\varepsilon p) \frac{\partial H_K}{\partial x}
+ (- M^{-1} H_x+\varepsilon q)\frac{\partial H_K}{\partial y}
= O(\varepsilon^K),
\end{equation} 
then $\frac{dr}{dt}=O(\varepsilon^K)$.
This claim can be easily proved 
by using the closed contour $H_K=h$ as the parameter
to express the displacement function.

Usually, like that considered in \cite{YuTian2014,YuHan2011} 
the method of focus values is used only to prove
how many limit cycles around the equilibrium point
that system \eqref{Eqn3} can have. 
Combining it with the approximation of first integrals,
we can obtain
the maximal number of small limit cycles for parameters 
in a neighborhood of critical conditions.
Furthermore, if the focus values are linear functions in  
parameters, we have a global result as follows. 

\begin{theorem}[Theorem 2.4.3 in \cite{Han2013}]\label{Thm1}
Consider system \eqref{Eqn5} and assume $V_{k}(r)\equiv0$, $1\le k<K$. 
Suppose that for an integer $m\ge 1$,
each $V_{iK}$, $0\le i<m$ is linear in $\delta$,
and further the following two conditions hold:

\vspace{0.05in} 
{\rm (i)}  $ \mbox{\rm rank} \Big[ \frac{\partial(V_{0K},\cdots,V_{m-1,K})}
{\partial(\delta_1,\cdots,\delta_m)} \Big] =m$,

\vspace{0.05in} 
{\rm (ii)} $V_{K}(r)\equiv0,\ {\rm if}\ V_{iK}=0, i=0,1,\cdots,m-1$. 

\vspace{0.05in}  
\noindent 
Then, for any given $N>0$, there exist $\varepsilon_0>0$ and a 
neighborhood $V$ of the origin such that system \eqref{Eqn3} 
has at most $m-1$ limit cycles in $V$ for 
$0<|\varepsilon|<\varepsilon_0$, $|\delta|\le N$. Moreover, $m-1$ limit 
cycles can appear in an arbitrary neighborhood of the origin for some 
values of $(\varepsilon,\delta)$. 
\end{theorem}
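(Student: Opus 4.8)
The plan is to turn the problem into counting zeros of a one-variable analytic function and then to run a Bautin-type argument whose essential inputs are hypotheses (i) and (ii). \emph{Step 1: reduction to a scalar zero count.} By the structure of the normal form \eqref{Eqn2}, \eqref{Eqn7}, the $r$-equation is $\theta$-independent with right-hand side of the form $r\,G(r^{2},\varepsilon,\delta)$, so the Poincar\'e return map of \eqref{Eqn3} is obtained by a quadrature and takes the form $r(2\pi,\rho,\varepsilon)=\rho\,\widehat\Psi(\rho^{2},\varepsilon,\delta)$ with $\widehat\Psi$ analytic; hence the displacement function is $d(\rho,\varepsilon,\delta)=\rho\,[\widehat\Psi(\rho^{2},\varepsilon,\delta)-1]$. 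Since $V_{k}(r)\equiv0$ for $1\le k<K$, \eqref{Eqn8} gives $\widehat\Psi-1=O(\varepsilon^{K})$, so, writing $u=\rho^{2}$,
\[ d(\rho,\varepsilon,\delta)=\varepsilon^{K}\rho\,\Phi(u,\varepsilon,\delta),\qquad \Phi \text{ analytic near the origin},\qquad \Phi(u,0,\delta)=g_{0}(u)\,\widehat V_{K}(u), \]
where $g_{0}>0$ and $\widehat V_{K}(u)=\sum_{i\ge0}V_{iK}(\delta)\,u^{i}$. Limit cycles of \eqref{Eqn3} in a small neighborhood of ${\rm E_0}$ then correspond to zeros of $u\mapsto\Phi(u,\varepsilon,\delta)$ in $(0,u_{0})$, whose number must be bounded uniformly in $(\varepsilon,\delta)$.

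\emph{Step 2: exploiting (i) and (ii).} By (i) the linear forms $V_{0K},\dots,V_{m-1,K}$ of $\delta$ are independent, so on $\{|\delta|\le N\}$ a bounded invertible linear change of parameters lets me take $\tilde\delta_{i}=V_{i-1,K}(\delta)$, $i=1,\dots,m$, as part of a coordinate system; put $\Sigma=\{\tilde\delta_{1}=\dots=\tilde\delta_{m}=0\}$. By (ii), $V_{K}(r)\equiv0$ on $\Sigma$, so every $V_{jK}$ vanishes on $\Sigma$ and Hadamard's lemma gives $\widehat V_{K}(u)=\sum_{i=1}^{m}\tilde\delta_{i}\,u^{\,i-1}\chi_{i}(u,\delta)$ with $\chi_{i}(0,\cdot)\equiv1$. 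By Remark~\ref{Rem1}(1), the vanishing of $V_{K}$ on $\Sigma$ also forces the first $K$ Melnikov functions to vanish there, hence $d=O(\varepsilon^{K+1})$ and $\Phi|_{\Sigma}=\varepsilon\,\Theta$ for an analytic $\Theta$; Hadamard's lemma in the transverse directions then yields
\[ \Phi(u,\varepsilon,\delta)=\sum_{i=1}^{m}\tilde\delta_{i}\,\Lambda_{i}(u,\varepsilon,\delta)+\varepsilon\,\Theta(u,\varepsilon,\delta),\qquad \Lambda_{i}(u,0,\cdot)=g_{0}(u)\,u^{\,i-1}\chi_{i}(u,\delta), \]
so $\Lambda_{1},\dots,\Lambda_{m}$ have leading monomials $1,u,\dots,u^{m-1}$ at $\varepsilon=0$.

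\emph{Step 3: the upper bound $m-1$.} I would rescale $u=u_{0}v$, $v\in(0,1)$, and normalize the coefficient vector $(\tilde\delta_{1},u_{0}\tilde\delta_{2},\dots,u_{0}^{m-1}\tilde\delta_{m},\varepsilon,\varepsilon^{2},\dots)$. Since $\chi_{i}(u_{0}v,\delta)\to1$ and $\Theta(u_{0}v,\varepsilon,\delta)\to\Theta(0,\varepsilon,\delta)$ uniformly on $[0,1]$ as $(u_{0},\varepsilon)\to(0,0)$, each $\Lambda_{i}$ becomes in the limit the pure monomial $v^{\,i-1}$, while $\varepsilon\Theta$ contributes only through its own Taylor monomials $1,v,\dots$; I would argue by contradiction that a putative sequence of parameters producing $m$ zeros in shrinking intervals yields, after this normalization and passage to a subsequence, a nonzero analytic limit which is (up to a positive factor) a polynomial in $v$ of degree $\le m-1$ plus a uniformly small, nearly $v$-constant remainder — such a function has at most $m-1$ zeros in $(0,1)$, a contradiction. (Equivalently, the successive ``divide by the non-vanishing dominant factor, then apply Rolle'' algorithm, run $m$ times, reduces the count to that of a non-zero constant.) This produces $u_{0}>0$ and $\varepsilon_{0}>0$ with at most $m-1$ zeros of $\Phi(\cdot,\varepsilon,\delta)$ in $(0,u_{0})$ for all $0<|\varepsilon|<\varepsilon_{0}$, $|\delta|\le N$, i.e. at most $m-1$ limit cycles of \eqref{Eqn3} in the corresponding neighborhood $V$ of ${\rm E_0}$.

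\emph{Step 4: realizing $m-1$ cycles, and the main obstacle.} Fixing all parameters other than $\tilde\delta_{1},\dots,\tilde\delta_{m}$, I choose $\beta=(\beta_{1},\dots,\beta_{m})$ so that $\sum_{i=1}^{m}\beta_{i}v^{\,i-1}$ has $m-1$ simple roots in $(0,1)$ and set $\tilde\delta_{i}=\beta_{i}u_{0}^{-(i-1)}$; then $\widehat V_{K}(u_{0}v)$ is $C^{1}$-close on $[0,1]$ to a positive multiple of $\sum\beta_{i}v^{\,i-1}$ for $u_{0}$ small, so $\Phi(\cdot,0,\delta)$ has $m-1$ simple zeros in $(0,u_{0})$ — inside any prescribed neighborhood of ${\rm E_0}$ — and, simple zeros persisting under the $O(\varepsilon)$ perturbation $\varepsilon\Theta$, the same holds for $0<|\varepsilon|$ small, each zero being a hyperbolic limit cycle. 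The step I expect to be the real obstacle is the \emph{uniformity} asserted in Step 3: the zero bound is elementary for one fixed parameter value, but forcing a single pair $(u_{0},\varepsilon_{0})$ to work for every $\delta$ with $|\delta|\le N$ and every small $\varepsilon\neq0$ is exactly the Bautin-ideal phenomenon, and it is most delicate on and near the degenerate stratum $\Sigma$, where $V_{K}\equiv0$ and the zero count is governed by the term $\varepsilon\Theta$ alone; controlling that term — equivalently, showing that the Bautin ideal of the full displacement function is still generated by $V_{0K},\dots,V_{m-1,K}$ — is where the analyticity of $\Phi$ in all its arguments and a careful blow-up/compactness analysis become indispensable.
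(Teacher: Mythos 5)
The paper does not actually prove this theorem: it is quoted from Han's book \cite{Han2013} and the authors explicitly write that ``the above theorem can be proved following the proof given in \cite{Han2013} with a minor modification. So the proof is omitted here.'' There is therefore no in-paper argument to compare against line by line; what follows assesses your proposal on its own terms. Your Steps 1, 2 and 4 are the standard and correct ingredients: the reduction of the displacement function to $\varepsilon^{K}\rho\,\Phi(u,\varepsilon,\delta)$ via \eqref{Eqn8}, the linear change of parameters from condition (i), the Hadamard/division decomposition $\Phi=\sum_{i=1}^{m}\tilde\delta_{i}\Lambda_{i}+\varepsilon\Theta$ from condition (ii), and the persistence of $m-1$ simple zeros for the lower bound.

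The genuine gap is the one you yourself flag at the end of Step 3, and it is not a technicality that ``careful compactness'' will absorb: hypotheses (i)--(ii) constrain only the $\varepsilon^{K}$-order coefficient $V_{K}(r)$, and give no information whatsoever about $\Theta$ (equivalently, about the $(K+1)$-st and higher order Melnikov functions) on the stratum $\Sigma=\{\tilde\delta_{1}=\cdots=\tilde\delta_{m}=0\}$. Concretely, your blow-up argument normalizes the vector $(\tilde\delta_{1},u_{0}\tilde\delta_{2},\dots,\varepsilon,\dots)$ and claims the limit function is a nonzero polynomial of degree $\le m-1$ in $v$; but the limit of $\Lambda_{1}$ and the limit of $\Theta$ are \emph{both} constants in $v$, so the limiting system of functions $\{1,v,\dots,v^{m-1},\Theta(0,0,\delta_{*})\}$ is linearly dependent and the normalized limit can vanish identically --- precisely in the regime $\delta\to\Sigma$ with $|\tilde\delta|\lesssim|\varepsilon|$. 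In that regime the zero count of $\Phi$ is governed by $\Theta(\cdot,\varepsilon,\delta)$ alone, i.e.\ by higher-order focus values about which the theorem's stated hypotheses assume nothing, so no uniform bound by $m-1$ can be extracted from (i)--(ii) alone. Closing this requires an additional input --- either that $\Sigma$ consists of center conditions for the full system (so that all higher $V_{j}(r)$ also vanish there), or that $v_{0},\dots,v_{m-1}$ generate the Bautin ideal of the full displacement function in the ring of analytic functions of $(\varepsilon,\delta)$. This is exactly the content of the ``minor modification'' the authors defer to \cite{Han2013}, and note that in the paper's own applications the missing ingredient is supplied separately by exhibiting approximate first integrals under the critical conditions (e.g.\ \eqref{Eqn32}, \eqref{Eqn36}, \eqref{Eqn39}). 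Your proof as written would need to state and use such a hypothesis explicitly; without it, Step 3 does not go through.
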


The above theorem can be proved following the proof given 
in \cite{Han2013} with a minor modification. So the proof is omitted here.

\section{Higher-order analysis leading to $11$ limit cycles 
in system \eqref{Eqn11}}

In this section, we focus on system \eqref{Eqn11}
and show that it can have $11$ limit cycles by using 
perturbations at least up to $7$th order. In the following,  
we will use the transformed system \eqref{Eqn11} with the 
simplified perturbations given in \eqref{Eqn19} for the analysis. 

In order to compute the focus values of this system, we first shift 
the equilibrium of system \eqref{Eqn11}, 
$(x,y)=(-\frac{a}{2}+o(\varepsilon^n),
-\frac{a^2+2}{4}+o(\varepsilon^n))$ 
to the origin and then use a computer algebra system
and software package (e.g., the Maple program in~\cite{Yu1998}) to 
obtain the focus values in terms of the parameters 
$a$, $a_{ijk}$ and $b_{ijk}$. 
We shall give detailed analysis for the first few lower-order 
focus values, 
and then summarize the results obtained from 
higher-order analysis. 

For convenience, define the vectors:  
\begin{equation}\label{Eqn25} 
\begin{array}{cl} 
W_k^8 & \!\!\! = \big(V_{1k}, V_{2k}, \cdots, V_{8k} \big), \\[0.5ex] 
W_k^9 & \!\!\! = \big(V_{1k}, V_{2k}, \cdots, V_{9k} \big), \\[0.5ex] 
W_k^{10} & \!\!\! = \big(V_{1k}, V_{2k}, \cdots, V_{10k} \big), \\[1.0ex] 
S_k^8 & \!\!\! =  
\big(b_{10k}, b_{20k}, b_{11k}, b_{02k}, b_{30k}, b_{21k}, b_{12k}, 
b_{03k} \big), \\[0.5ex]  
S_k^9 & \!\!\! =   
\big(b_{10k}, b_{20k}, b_{11k}, b_{02k}, b_{30k}, b_{21k}, b_{12k}, 
b_{03k}, a_{03k} \big), \\[0.5ex]  
S_k^{10} & \!\!\! =  
\big(b_{10k}, b_{20k}, b_{11k}, b_{02k}, b_{30k}, b_{21k}, b_{12k}, 
b_{03k}, a_{03k}, a_{12(3m)} \big), 
\end{array} 
\end{equation} 
where in $ S_k^{10}$, $k=7m$ for Case (A) and $k=13m$ for Case (B) 
($m \! \ge \! 1$, integer) to be considered in Sections 3.4 and 3.5;  
and the determinants:  
\begin{equation}\label{Eqn26} 
\begin{array}{rl}  
\det_k^{8} = \!\!\!& 
\det \! \left[ \textstyle\frac{\partial W_k^8} {\partial S_k^8 } \right]\!,
\quad  
\det_k^{9} = \det \! \left[ \textstyle\frac{\partial W_k^9} 
{\partial S_k^9} \right]\!, \quad  
\det_k^{10} = \det \! \left[ \textstyle\frac{\partial W_k^{10}} 
{\partial S_k^{10} } \right]\!;    
\end{array} 
\end{equation} 
and the functions:  
\begin{equation}\label{Eqn27}   
\begin{array}{ll}  
F_1 = \!\!\!\! & 
-\, \textstyle\frac{373423834799904305184768}{5\, a^{36} (a^4-32)^8} ,  
\\[1.0ex]
F_2 = \!\!\!\! & 
\textstyle\frac{3013505105717894236809449177088}{5\, a^{45} (a^4-32)^9}, 
\\[1.0ex] 
F_3 = \!\!\!\! & 
-\, \textstyle\frac{57397219210893210316046010501071634432}
{a^{55} (a^4-32)^{10}}, 
\\[1.0ex] 
F_4 = \!\!\!\! & 
-\, \textstyle\frac{279638476916415193342384256641414767487418}  
{a^{66} ( a^4 - 32)^{11}}, 
\\[2.0ex]  
G_1 = \!\!\!\! & -\, \frac{258237837}{32\, a^9 (a^4-32)}, \\[1.0ex] 
G_2 = \!\!\!\! & \frac{23476167}{64\, a^{11} (a^4-32)^2} 
      (57697 a^4-35728 a^2-88704), \\[1.0ex]  
G_3 = \!\!\!\! & -\, \frac{23476167}{1024\, a^{13} (a^4-32)^3}  
      (2304313595 a^8 \!-\! 1702233920 a^6 \!-\! 11829269248 a^4 \\[0.5ex] 
\!\!\!\! & \hspace{1.2in} -\, 39211065344 a^2+8642101248), \\[2.0ex] 
G_4 = \!\!\!\! & -\, \textstyle\frac{75246080}{ a^{10} (a^4-32)}, \\[1.0ex] 
G_5 = \!\!\!\! & \textstyle\frac{9405760}{3\, a^{12}(a^4-32)^2} 
      (75767 a^4-46944 a^2-96768), \\[1.0ex] 
G_6 = \!\!\!\! & -\,\textstyle\frac{180880}{3\, a^{14} (a^4-32)^3} 
      (11681524055 a^8-8555309984 a^6 -56944147200 a^4  \\ [0.5ex] 
\!\!\!\! & \hspace{1.0in} -\, 204210659328 a^2+30640177152), \\[2.0ex]  
G_7 = \!\!\!\! & \textstyle\frac{2006968901247765}{2883584\, 
a^{11} (a^4-32)}, 
\\[1.0ex] 
G_8 = \!\!\!\! & -\, \textstyle\frac{154382223172905}{2883584\, 
a^{13}(a^4-32)^2} (48667 a^4-30160 a^2-52416), 
\\[1.0ex] 
G_9 = \!\!\!\! & \textstyle\frac{66163809931245}{46137344\,a^{15} 
(a^4-32)^3} 
         (6314158847 a^8-4591849024 a^6 \\[0.5ex] 
\!\!\!\! &  \hspace{0.5in} -\,29599122432 a^4 - 112639700992 a^2+11915624448),  
\end{array} 
\end{equation}  
Note that $ F_i \ne 0,\, i=1,2,3$, and 
$G_i \ne 0, \, i=1,2, \dots, 9$, since 
$a^4 \!-\! 32 \!>\! 0$ for $ a \!<\! -2^{-5/4}$.

\subsection{$\varepsilon$- and $\varepsilon^2$-order analysis}

The $\varepsilon$-order focus values 
$V_{11}$, $V_{21}$, $\cdots$, $V_{111}$ are obtained 
by using the algorithm and Maple program developed in \cite{Yu1998}.
Their expressions are lengthy, and here we only present the first one 
for brevity, 
\begin{equation}\label{Eqn28} 
\begin{array}{cl} 
V_{11} & \!\!\!\!  = -\, \tss\frac{1}{64 a (a^4-32)} 
\big\{ 6912 b_{101} -5760 a \, b_{201} 
\\[1.0ex] 
& \!\!\!  \ \ 
+16 (a^4-36 a^2+40) \, b_{111} 
+48 a (a^4 +36 a^2 +160)\, b_{021} \\[1.0ex] 
& \!\!\!  \ \
+3456 a^2\, b_{301} -24 a (a^4-12 a^2+40)\, b_{211}  \\[1.0ex] 
& \!\!\!  \ \
-16 (3 a^6+68 a^4+300 a^2+20)\, b_{121}  \\[1.0ex]  
& \!\!\!  \ \ 
-24 a (3 a^6+65 a^4+300 a^2+224)\, b_{031} \\[1.0ex]  
& \!\!\!  \ \
+27 (a^2+2) (7 a^6+82 a^4+320 a^2+128)\, a_{031} \\[1.0ex] 
& \!\!\!  \ \
+8 a (24 a^6+223 a^4+1140 a^2+1180)\, a_{121} \\[0.5ex]  
& \!\!\!  \ \
+8 (21 a^6-73 a^4+480 a^2+320)\, a_{211}  \big\}.  
\end{array} 
\end{equation} 
It is noted that all $V_{i1}$'s are linear polynomials in
$a_{ij1}$ and $b_{ij1}$. It can be shown that 
\begin{equation}\label{Eqn29} 
\textstyle\det_1^{8} = F_1 \ne 0, \quad 
\det_1^{9} = F_2 \ne 0, \quad 
\det_1^{10} = 0. 
\end{equation}
In fact, with the solution of $S_1^8$ solved from $W_1^8=0$, we obtain 
\begin{equation}\label{Eqn30} 
\begin{array}{ll} 
V_{91} = G_1 \, a_{031}, \quad 
V_{101} = G_2 \, a_{031}, \quad 
V_{111} = G_3 \, a_{031}, 
\end{array} 
\end{equation} 
where $G_i$'s are given in \eqref{Eqn27}. Noticing 
$G_1 \ne 0$ for $a < - 2^{5/4}$, we have 
$V_{91} \ne 0$ if $a_{031} \ne 0$. 
Moreover, ${\det}_1^8 \ne 0$ and \eqref{Eqn23}
indicate that perturbing $W_1^8$ and $V_{01}$ 
around the solutions $S_1^8$ and $b_{011}$ (see \eqref{Eqn22}) does yield 
$9$ small limit cycles around the equilibrium ${\rm E_0}$. 

It is seen from \eqref{Eqn30} that 
$V_{91} \!=\! V_{101} \!=\! V_{111} \!=\! 0$ for $a_{031} \!=\! 0$. 
For convenience, define the critical condition ${\rm S_{1c}^8} $, 
satisfying \eqref{Eqn22}, $W_1^8 \!=\! 0$ and $a_{031} \!=\! 0$, as 
\begin{equation}\label{Eqn31}
{\rm S_{1c}^8\! :} \ \left\{\! \begin{array}{rl}
b_{011} = &\!\!\! C_1 \, a_{121} -\frac{9}{8}a^3a_{211}, \\[1.0ex]
a_{031} = & \!\!\! b_{211} = 0, \ \ b_{121} = \frac{7}{2}\, a_{211}, \ \
b_{021} = -\,6\, a_{121}, \ \ b_{031} = \frac{8}{3}\, a_{121}, \\[1.0ex] 
b_{111} = & \!\!\! 9 a\, a_{121} + \frac{9}{2}\, a_{211}, \hspace*{0.265in} 
b_{101} = C_2 \, a_{121} + C_3 \, a_{211}, \\[1.0ex] 
b_{201} = & \!\!\! C_4 \, a_{121} + C_5 \,a_{211},  \quad 
b_{301} = C_6 \, a_{121} + C_7 \, a_{211}, 
\end{array}  \hspace*{-0.20in} 
\right. 
\end{equation}
where $C_i$'s are given in Appendix A.  

We have the following result.

\begin{theorem}\label{Thm2}
The equilibrium ${\rm E_0}$ of system \eqref{Eqn11}
is a center up to $\varepsilon$-order, 
i.e. all $\varepsilon$-order focus values vanish  
if and only if the condition ${\rm S_{1c}^8}$ holds.
Furthermore, there exist at most $9$ small limit cycles
around ${\rm E_0}$ for all parameters $a_{ij1}$ and 
$b_{ij1}$, and $9$ small limit cycles can be obtained for some parameter
values near ${\rm S_{1c}^8}$. 
\end{theorem}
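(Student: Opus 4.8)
The plan is to prove the two assertions of Theorem~\ref{Thm2} separately, relying on the $\varepsilon$-order focus values $V_{11},\dots,V_{11,1}$ already computed and on the rank/determinant facts stated in \eqref{Eqn29}--\eqref{Eqn30}. First I would establish the ``if and only if'' characterization of the center condition up to $\varepsilon$-order. Recall from Remark~\ref{Rem1}(1) and \eqref{Eqn8} that $\mathrm{E_0}$ is a center up to $\varepsilon$-order precisely when $V_1(r)\equiv 0$, i.e.\ when all $\varepsilon$-order focus values vanish: $V_{01}=V_{11}=\cdots=0$. The equation $V_{01}=0$ is exactly \eqref{Eqn22}, which has already been imposed. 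For the remaining focus values, the key computational input is that $\det_1^8 = F_1 \ne 0$ (see \eqref{Eqn29}), so the map $S_1^8 \mapsto W_1^8$ is invertible and $W_1^8 = 0$ can be solved uniquely for $S_1^8$ in terms of $a_{121}$ and $a_{211}$; this produces the first eight lines of \eqref{Eqn31}. Substituting this solution back into $V_{91},V_{101},V_{11,1}$ gives \eqref{Eqn30}, namely $V_{91}=G_1\,a_{031}$, etc., with $G_1\ne0$ for $a<-2^{5/4}$. Hence $V_{91}=0$ forces $a_{031}=0$, and then automatically $V_{101}=V_{11,1}=0$ as well. Conversely, if ${\rm S_{1c}^8}$ holds then all of $V_{11},\dots,V_{11,1}$ vanish by construction, and since $V_{01}=0$ is built into \eqref{Eqn22}, we get $V_1(r)\equiv 0$. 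Finally one must check $V_{i1}=0$ for all $i\ge 11$ under ${\rm S_{1c}^8}$: this follows because the space of first-order Melnikov functions for system \eqref{Eqn11}$|_{\varepsilon=0}$ is spanned by finitely many integrals (ten, by \cite{TianYu2016}), so $V_1(r)$ is determined by finitely many of its coefficients, and these already vanish.

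For the second assertion — at most $9$ limit cycles, with $9$ realizable — I would invoke Theorem~\ref{Thm1} with $K=1$ and $m=10$, taking $\delta$ to consist of the ten parameters $(b_{011},S_1^8,a_{031})$ appearing linearly in the $V_{i1}$'s. Since $V_{k}(r)\equiv 0$ for $1\le k<K=1$ is vacuous, we only need to verify conditions (i) and (ii) of Theorem~\ref{Thm1}. Condition (i): the rank of $\partial(V_{01},V_{11},\dots,V_{91})/\partial(b_{011},S_1^8,a_{031})$ equals $10$. This follows by a triangular argument: $V_{01}$ depends on $b_{011}$ with nonzero coefficient (see \eqref{Eqn23}); the submatrix $\partial W_1^8/\partial S_1^8$ has determinant $\det_1^8 = F_1 \ne 0$ by \eqref{Eqn29}; and after eliminating $S_1^8$ one has $V_{91}=G_1 a_{031}$ with $G_1\ne 0$ by \eqref{Eqn30}. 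Ordering the variables as $b_{011}$, then $S_1^8$, then $a_{031}$, the Jacobian is block lower-triangular with nonzero diagonal blocks, so it has full rank $10$. Condition (ii): if $V_{01}=V_{11}=\cdots=V_{91}=0$ then $V_1(r)\equiv 0$ — this is essentially the forward direction of the first assertion, since $V_{01}=0$, $W_1^8=0$ together with $V_{91}=G_1 a_{031}=0$ force exactly the condition ${\rm S_{1c}^8}$, which we have just shown implies $V_1(r)\equiv 0$. With (i) and (ii) verified, Theorem~\ref{Thm1} yields at once: at most $m-1=9$ limit cycles in a neighborhood $V$ of ${\rm E_0}$ for $|\delta|\le N$, and $9$ limit cycles appearing in an arbitrary neighborhood of ${\rm E_0}$ for suitable $(\varepsilon,\delta)$ near ${\rm S_{1c}^8}$.

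The main obstacle, I expect, is condition (ii) of Theorem~\ref{Thm1}: showing that the vanishing of the \emph{finitely many} focus values $V_{01},\dots,V_{91}$ forces $V_1(r)\equiv 0$, i.e.\ that $V_{i1}=0$ for \emph{all} $i$. This requires knowing that $V_{01},\dots,V_{91}$ (equivalently, the condition ${\rm S_{1c}^8}$) cut out precisely the locus where the first-order Melnikov function vanishes identically. The clean way to see this is via Remark~\ref{Rem1}(1): $V_1(r)$ and $M_1(h)$ carry the same information, and $M_1(h)$ lies in a finite-dimensional space of Abelian-type integrals whose dimension (ten, after the corrections in \cite{TianYu2016}) matches the number of parameters $\delta$; the ten equations $V_{01}=\cdots=V_{91}=0$ therefore force $M_1(h)\equiv 0$. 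Everything else — solving $W_1^8=0$ for $S_1^8$, computing the $G_i$'s, checking $F_i\ne 0$ and $G_i\ne 0$ on $a<-2^{5/4}$ — is routine computer-algebra, already carried out via the Maple program of \cite{Yu1998}, and the explicit constants $C_i$ are deferred to Appendix~A.
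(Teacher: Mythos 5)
Your proposal is correct in substance and reaches the same conclusions, but it proves the key ``sufficiency'' step (that ${\rm S_{1c}^8}$ forces $V_{i1}=0$ for \emph{all} $i$, not just $i\le 11$) by a genuinely different route. The paper's proof constructs an explicit $\varepsilon$-order approximation of the first integral, $H_1(x,y,\varepsilon)=(f_1+\varepsilon f_{11})/(f_2+\varepsilon f_{21})$ with the $f_{i1}$ given in \eqref{Eqn32}--\eqref{Eqn33}, and then invokes the criterion \eqref{Eqn24} to conclude $\frac{dr}{dt}=O(\varepsilon^2)$, i.e.\ $V_1(r)\equiv0$; this is self-contained and is the template reused at every higher order (Theorems~\ref{Thm3}, \ref{Thm4}, and Remark~\ref{Rem2}). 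You instead argue by dimension count: the image of the linear map $\delta\mapsto V_1(\cdot;\delta)$ is at most ten-dimensional because the first-order Melnikov functions are spanned by ten integrals (citing \cite{TianYu2016} and Remark~\ref{Rem1}(1)), while the rank-$10$ Jacobian condition shows the kernel of the truncation to $(V_{01},\dots,V_{91})$ is already as small as the kernel of the full map, so the two kernels coincide. This argument is valid as stated, and it is arguably cleaner for the linear first-order case; its costs are that it outsources the essential content to the external result of \cite{TianYu2016} rather than exhibiting the center mechanism, and that it does not generalize to the higher $\varepsilon^k$-orders treated later in the paper, where the focus values are no longer linear in the parameters and the approximate-first-integral construction is the only tool available. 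Your treatment of the ``at most $9$ / exactly $9$'' part via Theorem~\ref{Thm1} with $m=10$ matches the paper's, with somewhat more explicit verification of conditions (i) and (ii).
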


\begin{proof} 
The existence of $9$ small limit cycles has been shown under 
the solution $ S_1^8$ with 
$ a_{031} \! \ne \! 0$ and $ {\rm det}_1^8 \! \ne \! 0$. 
It is obvious that the critical condition ${\rm S_{1c}^8}$ 
is {\it necessary} for all $\varepsilon$-order focus values 
to vanish since $V_{i1} \! =\! 0, \ 0 \! \le \! i \! \le \! 11$ under this 
condition. To prove {\it sufficiency},  
under the critical condition ${\rm S_{1c}^8}$, 
we use \eqref{Eqn24} to obtain 
the following $\varepsilon$-order approximation of the first integral, 
\begin{equation}\label{Eqn32}
H_1(x,y,\varepsilon)=\frac{f_1+\varepsilon f_{11}}{f_2+\varepsilon f_{21}},
\end{equation}
where $f_1$ and $f_2$ are given in \eqref{Eqn13}, 
$f_{11}=a_{121}r_{1}+a_{211}r_{2}$ and
$f_{21}=a_{121}r_{3}+a_{211}r_{4}$ with
\begin{equation}\label{Eqn33}
\begin{array}{rl}
r_{1} =& \!\!\!\! -\frac{1}{48}\big[a^2(3a^2+4)(5+2y+2x^2+x^4)
+220 -192ax + 280y\\[0.5ex]
&\!\!\!\! \qquad +120x^2 -64y^2 +128ax^3 +64x^2y +76x^4 \big],\\[1ex]
r_{2} =&\!\!\!\! -\frac{1}{8}a(5a^2-4) +5x -\frac{1}{8}a(a^2-4)(2y+2x^2+x^4)
 +2xy -2x^3,\\[1ex]
r_{3} =&\!\!\!\! \frac{1}{192} \big[a^2(3a^2 \!+\! 4)(4a \!-\! 15x 
\!+\! 10xy \!+\! 10x^3) +304a+16a^3-180x\\[1ex]
& \!\!\!\! \qquad -\, 40 \big( 16ay \!-\! 8ax^2 \!+\! 5xy \!-\! 23x^3
\!-\! 8xy^2 \!+\! 16ax^4 \!+\! 8x^3y \big) \big],\\[1ex]
r_{4}=&\!\!\!\!\frac{a^2}{8}(a^2 \!-\! 1)
\!-\! a(\frac{15}{32}a^2 \!+\! \frac{5}{8})x 
\!+\! \frac{5}{2}x^2(\frac{3}{2} \!+\! y \!-\! x^2) 
\!+\! \frac{5}{16}a(a^2 \!-\! 4)x(y \!+\! x^2). 
\end{array}
\end{equation} 
This implies that setting 
the first $10$ focus values $V_{i1}=0$, $i=0,\cdots,9$
yields $ V_{i1} \!=\!0$ for all $i \! \ge \! 10$.
Moreover, due to that all $V_{i1}$ are linear in  
all parameters $a_{ij1}$ and $b_{ij1}$,   
by Theorem \ref{Thm1} at most 9 
small limit cycles can be obtained for this case. 
The proof is complete. 
\end{proof}

Now suppose the condition ${\rm S_{1c}^8}$ holds and so all 
$\varepsilon$-order 
focus values vanish, we then need to use the $\varepsilon^2$-order focus 
values to study bifurcation of limit cycles. 
With an almost exact same procedure as that used in 
the $\varepsilon$-order analysis, 
we can find a solution $S_2^8$ such that $W_2^8=0$, and then 
\begin{equation}\label{Eqn34} 
\begin{array}{ll} 
V_{92} = G_1 \, a_{032}, \quad 
V_{102} = G_2 \, a_{032}, \quad 
V_{112} = G_3 \, a_{032}, \quad \det_2^{8} = F_1 \ne 0,  
\end{array} 
\end{equation} 
where $F_1$ and $G_i$'s are given in \eqref{Eqn27}. 
Note that the above equations are exactly the same as those given in 
\eqref{Eqn29} and \eqref{Eqn30}, if we replace $k \!=\! 1$ by $k \!=\! 2$ 
in \eqref{Eqn29} and \eqref{Eqn30}. 
This clearly shows that there can exist $9$ limit cycles around the 
equilibrium ${\rm E_0}$ when all $\varepsilon$-order focus values vanish. 
It is also noted that all $V_{i2}$ are linear polynomials 
in $a_{ij2}$ and $b_{ij2}$. 

Similarly, we see that setting $a_{032} \!=\! 0$ in \eqref{Eqn34} yields 
$V_{92} \!=\! V_{102} \!=\!  V_{112} \!=\! 0$, 
implying that the solution $S_2^8$ with $a_{032} \!=\! 0$
and $b_{012}$ given in \eqref{Eqn22} defines 
a necessary condition for all $\varepsilon^2$-order focus values to 
vanish. This critical condition is given below: 
\begin{equation}\label{Eqn35}
\hspace*{-0.20in} {\rm S_{2c}^8\!:} \, \left\{ \!\! 
\begin{array}{rl} 
b_{012} = \!\!\!\! & \textstyle\frac{9}{64} a ^4 a_{211}^2
- \frac{9}{8} a^3 a_{212}  + C_1 a_{122} + C_8 a_{121}a_{211}
+ C_9 a_{121}^2 , \\[1.0ex] 
a_{032} = \!\!\!\! & 0, \ \ 
b_{032} = \textstyle\frac{8}{3}\, a_{122} +5\, a_{121}^2 , \ \ 
b_{212} = \textstyle\frac{a}{2} \, a_{121} (5 a\, a_{121}-9 a_{211}), \\[1.0ex] 
b_{122} = \!\!\!\! & \textstyle\frac{7}{2}\, a_{212} 
- \frac{1}{4}\, a_{121} (31 a\, a_{121}-45 a_{211}) , \\[1.0ex] 
b_{102} = \!\!\!\! & C_2\, a_{122} + C_3 \, a_{212} 
+ C_{10} \, a_{121}^2 + C_{11} \, a_{211}^2 + C_{12} \, a_{121} a_{211}, \\[1.0ex] 
b_{202} = \!\!\!\! & C_4\, a_{122} + C_5 \, a_{212} 
+  C_{13} \, a_{121}^2 + C_{14} \, a_{211}^2
+ C_{15} \, a_{121} a_{211}, \hspace*{-0.15in} \\[1.0ex]   
b_{112} = \!\!\!\! & 9 a\, a_{122}+ \textstyle\frac{9}{2}\, a_{212} 
       + \frac{9a^3}{32} a_{211}^2 
       + C_{16} \, a_{121}^2 + C_{17} \, a_{121} a_{211}, \\[1.0ex] 
b_{022} = \!\!\!\! & -\,6 a_{122} + C_{18} \, a_{121}^2 
+ C_{19} \, a_{121} a_{211}, \\[1.0ex] 
b_{302} = \!\!\!\! & C_6 \, a_{122} + C_7 \, a_{212} + C_{20} \, a_{121}^2 
+ C_{21} \, a_{211}^2 + C_{22} \, a_{211} a_{121},
\end{array} \hspace*{-0.20in}  
\right.    
\end{equation} 
where $C_i$'s are given in Appendix A.  

We have the following theorem.

\begin{theorem}\label{Thm3}
Assume ${\rm S_{1c}^8}$ holds.
The equilibrium ${\rm E_0}$ of system \eqref{Eqn11}
is a center up to $\varepsilon^2$-order, 
if and only if ${\rm S_{2c}^8}$ holds.
Furthermore, there exist at most $9$ small limit cycles
around ${\rm E_0}$ for all parameters $a_{ij2}$ 
and $b_{ij2}$, and $9$ small limit cycles exist for some parameter values 
near ${\rm S_{2c}^8}$.  
\end{theorem}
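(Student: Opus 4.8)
The plan is to mirror exactly the structure of the proof of Theorem \ref{Thm2}, now working one $\varepsilon$-order higher under the standing hypothesis that ${\rm S_{1c}^8}$ holds. First I would observe that, once ${\rm S_{1c}^8}$ is imposed, all $\varepsilon$-order focus values vanish, so the first potentially nonzero contribution to $dr/dt$ is of order $\varepsilon^2$, i.e.\ $V_1(r)\equiv 0$ in \eqref{Eqn5}. Hence the $\varepsilon^2$-order focus values $V_{i2}$ are well defined as the coefficients of $V_2(r)$, and, as already noted in the excerpt, each $V_{i2}$ is a (nonhomogeneous) linear polynomial in the $\varepsilon^2$-order parameters $a_{ij2},b_{ij2}$, with coefficients depending on $a$ and on the first-order parameters $a_{121},a_{211}$ that survive in ${\rm S_{1c}^8}$. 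Solving the linear system $W_2^8=0$ for $S_2^8$ is possible precisely because $\det_2^8=F_1\neq 0$ (equation \eqref{Eqn34}), which is the same nonvanishing determinant as at first order; substituting this solution back gives $V_{92}=G_1\,a_{032}$, $V_{102}=G_2\,a_{032}$, $V_{112}=G_3\,a_{032}$, with $G_1\neq 0$. Therefore, if $a_{032}\neq 0$ one may perturb $S_2^8$ and $b_{012}$ around their critical values to produce $9$ simple zeros of the displacement function, i.e.\ $9$ small limit cycles around ${\rm E_0}$; and conversely $V_{92}=0$ forces $a_{032}=0$, which together with $W_2^8=0$ and \eqref{Eqn22} is exactly the critical condition ${\rm S_{2c}^8}$ displayed in \eqref{Eqn35}. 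This establishes that ${\rm S_{2c}^8}$ is necessary for all $\varepsilon^2$-order focus values to vanish, and it also yields the lower bound of $9$ limit cycles near ${\rm S_{2c}^8}$.

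For the converse (sufficiency of ${\rm S_{2c}^8}$), the approach is to exhibit an $\varepsilon^2$-order approximate first integral. Concretely, I would look for an analytic function of the form
\begin{equation*}
H_2(x,y,\varepsilon)=\frac{f_1+\varepsilon f_{11}+\varepsilon^2 f_{12}}{f_2+\varepsilon f_{21}+\varepsilon^2 f_{22}},
\end{equation*}
extending \eqref{Eqn32}, where $f_1,f_2$ are as in \eqref{Eqn13}, $f_{11},f_{21}$ are the first-order correction terms already determined in \eqref{Eqn33} (now with $a_{121},a_{211}$ the relevant parameters), and $f_{12},f_{22}$ are polynomials to be found. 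Plugging $H_2$ into the defining identity \eqref{Eqn24} with $K=3$, the $\varepsilon^0$ and $\varepsilon^1$ terms vanish by construction, and the $\varepsilon^2$ term produces a linear PDE (a ``homological''-type equation) for $f_{12},f_{22}$ whose solvability is what must be checked. Under ${\rm S_{2c}^8}$ this equation should be solvable with polynomial $f_{12},f_{22}$; once it is, \eqref{Eqn24} gives $dr/dt=O(\varepsilon^3)$, hence $V_2(r)\equiv 0$, i.e.\ all $\varepsilon^2$-order focus values vanish, proving sufficiency.

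Finally, to get the sharp upper bound of $9$ I would invoke Theorem \ref{Thm1} in the same way as in the proof of Theorem \ref{Thm2}: the existence of the approximate first integral $H_2$ shows that $V_{i2}=0$ for $i=0,\dots,9$ implies $V_{i2}=0$ for all $i\ge 10$ (condition (ii) with $m=10$, $\delta=(b_{012},S_2^8)$, and $K=2$ in the notation of Theorem \ref{Thm1}), while $\det_2^8=F_1\neq 0$ together with \eqref{Eqn22} gives the rank condition (i); all $V_{i2}$ being linear in the $\varepsilon^2$-order parameters is exactly the hypothesis of that theorem. Hence at most $9$ limit cycles can bifurcate, and combined with the lower bound this proves the theorem. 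The main obstacle I anticipate is the sufficiency step: one must actually verify that the $\varepsilon^2$-order homological equation for $(f_{12},f_{22})$ admits a polynomial solution under ${\rm S_{2c}^8}$ — this is a genuine computation (the analogue of checking \eqref{Eqn33}), and it is conceivable that additional constraints beyond $a_{032}=0$ and $W_2^8=0$ would be needed, in which case ${\rm S_{2c}^8}$ as stated would have to already encode them; confirming that it does is the crux.
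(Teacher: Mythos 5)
Your proposal follows essentially the same route as the paper: necessity and the lower bound of $9$ limit cycles from \eqref{Eqn34} with $G_1\neq0$ and $\det_2^8=F_1\neq0$, sufficiency via the rational approximate first integral $H_2$ in \eqref{Eqn36} satisfying \eqref{Eqn24}, and the upper bound via Theorem~\ref{Thm1} using the linearity of the $V_{i2}$ in $a_{ij2},b_{ij2}$. The one step you flag as the crux --- solvability of the $\varepsilon^2$-order homological equation for $f_{12},f_{22}$ under ${\rm S_{2c}^8}$ --- is exactly what the paper settles by exhibiting the explicit polynomials $s_1,\dots,s_6$ of Appendix~B, so no additional constraints beyond $a_{032}=0$ and $W_2^8=0$ are needed.
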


\begin{proof}
Similarly, we only need to prove {\it sufficiency}.
With ${\rm S_{1c}^8}$ and ${\rm S_{2c}^8}$ holding,
we can use \eqref{Eqn24} to find 
the following $\varepsilon^2$-order approximation of the first integral, 
\begin{equation}\label{Eqn36} 
H_2(x,y,\varepsilon)=\frac{f_1+\varepsilon f_{11}+\varepsilon^2 f_{12}}
{f_2+\varepsilon f_{21}+\varepsilon^2f_{22}}, 
\end{equation}
where $f_{11}$ and $f_{21}$ are given in $H_1(x,y,\varepsilon)$ (see 
Eq.~\ref{Eqn32}), and 
$$ 
\begin{array}{ll} 
f_{21}=a_{122}r_{1}+a_{212}r_{2}+a_{121}^2s_1+a_{211}^2s_2+a_{121}a_{211}s_3, 
\\[0.5ex] 
f_{22}=a_{122}r_{3}+a_{212}r_{4}+a_{121}^2s_4+a_{211}^2s_5+a_{121}a_{211}s_6, 
\end{array} 
$$ 
in which $r_i, \, i=1,2,3,4$ are given in \eqref{Eqn33}, 
and $s_i,\, i=1,2, \dots, 8$, are listed in Appendix B. 

The existence of $9$ small limit cycles is easily seen from 
$V_{92} \! \ne \! 0$ and $\det_2^8 \! \ne \! 0$ when $a_{032} \! \ne \! 0$ 
under the critical condition ${\rm S_{2c}^8}$. 
On the other hand, the above results show that setting 
$V_{i2}$, $0\le i\le 9$ results in 
$V_{i2} \! = \! 0$ for all $ i \! \ge \! 10$.
Further, all $V_{i2}$'s are linear in $a_{ij2}$ and $b_{ij2}$, 
and ${\rm S_{2c}^8}$ is the unique solution of $V_{i2}=0$, $0\le i\le 9$.
Then by Theorem \ref{Thm1}, 
at most $9$ small limit cycles can be obtained around ${\rm E_0}$
for all parameters $a_{ij2}$ and $b_{ij2}$.
\end{proof}

\subsection{$\varepsilon^3$-order analysis} 

In this section, we assume 
the critical condition $\{\rm S_{1c}^8, S_{2c}^8 \}$, 
which stands for that both 
the critical conditions ${\rm S_{1c}^8}$ and ${\rm S_{2c}^8 }$ hold, 
under which all $\varepsilon$- and $\varepsilon^2$-order focus values vanish. 
Thus, we use $\varepsilon^3$-order focus values $V_{i3}$ to 
study bifurcation of limit cycles around the equilibrium 
${\rm E_0}$. With a similar procedure, but for this order, 
we solve $9$ equations  $W_3^9=0$ to obtain the solution $S_3^9$ 
for which  
\begin{equation}\label{Eqn37} 
\begin{array}{ll} 
V_{103} = G_4 \, a_{121}^3, \quad 
V_{113} = G_5 \, a_{121}^3, \quad 
V_{123} = G_6 \, a_{121}^3, \quad 
\det_3^{9} = F_2 \ne 0,  
\end{array} 
\end{equation} 
where $F_2$ and $G_i$'s are given in \eqref{Eqn27}. 
Note that for this order, there is one more independent coefficient
$a_{033}$ in $S_3^9$ for solving $W_3^9 =0$, 
compared to the solutions $S_1^8$ and $S_2^8$ which have only 
$8$ independent coefficients to be used for solving the first 
$8$ focus value equations. 
The equations in \eqref{Eqn37} show that when all 
$\varepsilon$- and $\varepsilon^2$-order focus values vanish, 
the $\varepsilon^3$-order focus values can have solutions such that 
$ V_{i3}=0, \, i = 0, 1, \cdots, 9 $ but $ V_{103} \ne 0$, as well 
as $ \det_3^{9} \ne 0$, implying that $10$ small limit cycles 
can bifurcate from the equilibrium ${\rm E_0}$. 

Setting $a_{121} = 0$ in \eqref{Eqn37}, 
we have $V_{103} = V_{113} = V_{123} =0$, implying that 
under the solution $S_3^9$ with $a_{121}=0$ and 
$b_{013} $ given in \eqref{Eqn22}, the equilibrium ${\rm E_0}$ 
might be a center up to $\varepsilon^3$ order. 
This critical condition is given by 
\begin{equation}\label{Eqn38} 
\hspace*{-0.20in} {\rm S_{3c}^9 }\!: \left\{\!\!  
\begin{array}{rl}
b_{013} = \!\!\!\! 
&  \frac{9}{32} a^4 a_{211} a_{212} 
- \frac{9}{8} a^3 a_{213} + C_1 a_{123} + C_8 a_{122} a_{211} 
+ C_{23} a_{211}^3, \\[1.0ex] 
a_{121} = \!\!\!\! & a_{033} = 0, \quad 
b_{033} = \frac{8}{3} a_{123}, \quad 
b_{023} = -\,6 a_{123} + C_{19} a_{122} a_{211}  
\\[1.0ex] 
b_{213} = \!\!\!\! & -\,\frac{9a}{16} a_{211} (8 a_{122} \!+\! a_{211}^2), 
\quad  b_{123} = \frac{7}{2} a_{213} 
\!+\! \frac{45}{32} a_{211} (8 a_{122} \!+\! a_{211}^2 ), \\[1.0ex] 
b_{113} = \!\!\!\! & 9 a\, a_{123} + \frac{9}{2} a_{213} 
        + a_{211} \big[ \frac{9}{16} a^3 a_{212} + C_{17} a_{122} 
+ C_{24} a_{211}^2 \big] \\[1.0ex] 
b_{103} = \!\!\!\! & C_2 a_{123} + C_3 a_{213}  
+ a_{211} \big[ 2 C_{11}  a_{212} +C_{12} a_{122} 
+ C_{25} a_{211}^2 \big], \\[1.0ex] 
b_{203} = \!\!\!\! & C_4 a_{123} + C_5 a_{213}
+ a_{211} \big[ 2 C_{14} a_{212} +C_{15} a_{122} 
+ C_{26} a_{211}^2 \big]   \\[1.0ex] 
b_{303} = \!\!\!\! & C_6 a_{123} + C_7 a_{213} 
+ a_{211} \big[ 2 C_{21} a_{212} + C_{22} a_{122} 
+ C_{27} a_{211}^2 \big], 
\end{array} \hspace*{-0.20in}  
\right. 
\end{equation}
under which the critical conditions ${\rm S_{1c}^8}$ and ${\rm S_{2c}^8}$ 
are simplified. Here, $C_i$'s are given in Appendix A.  

We have the following theorem. 

\begin{theorem}\label{Thm4}
Let ${\{\rm S_{1c}^8, S_{2c}^8}\}$ hold.
The equilibrium ${\rm E_0}$ of system \eqref{Eqn11}
is a center up to $\varepsilon^3$-order, 
if and only if the condition ${\rm S_{3c}^9}$ holds.
Furthermore, there exist $10$ small limit cycles
around ${\rm E_0}$ for some parameter values of $a_{ij3}$ and $b_{ij3}$ 
near the critical value defined by ${\rm S_{3c}^9}$ when $V_{103}\neq 0$. 
\end{theorem}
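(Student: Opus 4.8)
The plan is to mimic, at $\varepsilon^3$-order, the three-part scheme of Theorems~\ref{Thm2} and \ref{Thm3}: (i) show that ${\rm S_{3c}^9}$ is necessary for ${\rm E_0}$ to be a center up to $\varepsilon^3$-order; (ii) show it is sufficient by exhibiting an $\varepsilon^3$-order approximate first integral; and (iii) produce $10$ limit cycles from \eqref{Eqn37}. Step (i) is essentially built into the way ${\rm S_{3c}^9}$ was derived: assuming $\{{\rm S_{1c}^8},{\rm S_{2c}^8}\}$, if ${\rm E_0}$ is a center up to $\varepsilon^3$-order then $V_3(r)\equiv0$, so in particular $V_{i3}=0$ for $0\le i\le 10$; since $\det_3^9=F_2\ne0$, the nine equations $W_3^9=0$, which are linear in the $\varepsilon^3$-order coefficients, have $S_3^9$ as their unique solution, and $V_{03}=0$ fixes $b_{013}$ as in \eqref{Eqn22}; substituting into \eqref{Eqn37} gives $V_{103}=G_4\,a_{121}^3$, and $G_4\ne0$ forces $a_{121}=0$, which, together with the resulting simplification of ${\rm S_{1c}^8}$ and ${\rm S_{2c}^8}$, is exactly ${\rm S_{3c}^9}$.

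For step (ii), under $\{{\rm S_{1c}^8},{\rm S_{2c}^8},{\rm S_{3c}^9}\}$ I would look for an approximate first integral extending \eqref{Eqn32} and \eqref{Eqn36} by one more order,
$$H_3(x,y,\varepsilon)=\frac{f_1+\varepsilon f_{11}+\varepsilon^2 f_{12}+\varepsilon^3 f_{13}}{f_2+\varepsilon f_{21}+\varepsilon^2 f_{22}+\varepsilon^3 f_{23}},$$
where $f_{11},f_{21}$ are as in $H_1$, $f_{12},f_{22}$ are as in $H_2$, and $f_{13},f_{23}$ are polynomials of the same bounded degree with undetermined coefficients. Substituting $H_3$ into \eqref{Eqn24} with $K=4$ and clearing the denominator (which equals $f_2^2+O(\varepsilon)$ and is nonzero near ${\rm E_0}$), the $\varepsilon^0$, $\varepsilon^1$ and $\varepsilon^2$ orders vanish because of the choices already made for $H_1$ and $H_2$, while the $\varepsilon^3$ order reduces to a linear system for the coefficients of $f_{13},f_{23}$. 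The crux of this step is to verify that this linear system is consistent \emph{exactly} when ${\rm S_{3c}^9}$ holds, and to record a particular solution $f_{13},f_{23}$ (in the spirit of \eqref{Eqn33} and Appendix~B). Once $H_3$ is found, the claim following \eqref{Eqn24} gives $\frac{dr}{dt}=O(\varepsilon^4)$, hence $V_3(r)\equiv0$, so ${\rm E_0}$ is a center up to $\varepsilon^3$-order.

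For step (iii), \eqref{Eqn37} shows that under the solution $S_3^9$ with $a_{121}\ne0$ one has $V_{03}=V_{13}=\cdots=V_{93}=0$ but $V_{103}=G_4\,a_{121}^3\ne0$. Since $\det_3^9=F_2\ne0$ and $V_{03}$ depends on $b_{013}$ with nonzero coefficient (see \eqref{Eqn23}), perturbing the ten $\varepsilon^3$-order quantities $b_{013}$ and $S_3^9$ about this solution varies $V_{03}$ and $W_3^9$ independently, and the standard generalized-Hopf bifurcation argument then yields $10$ small limit cycles around ${\rm E_0}$, as claimed. In contrast to Theorems~\ref{Thm2} and \ref{Thm3}, no upper bound on the number of limit cycles is asserted here, because the center obstruction $a_{121}=0$ involves an $\varepsilon$-order parameter rather than an $\varepsilon^3$-order one, so the linearity hypothesis of Theorem~\ref{Thm1} is not available at this order.

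The main obstacle is the symbolic work underlying \eqref{Eqn37} and $H_3$: computing the $V_{i3}$ under $\{{\rm S_{1c}^8},{\rm S_{2c}^8}\}$, solving $W_3^9=0$ for $S_3^9$, and verifying the factorizations $V_{103}=G_4\,a_{121}^3$, $V_{113}=G_5\,a_{121}^3$, $V_{123}=G_6\,a_{121}^3$ together with $\det_3^9=F_2\ne0$; and, on the other side, determining $f_{13},f_{23}$ and proving that solvability of the $\varepsilon^3$-order equation in \eqref{Eqn24} is equivalent to ${\rm S_{3c}^9}$. As in the $\varepsilon$- and $\varepsilon^2$-order cases, both tasks are carried out with a computer algebra system.
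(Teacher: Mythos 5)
Your proposal is correct and follows essentially the same route as the paper: necessity is read off from the solution $S_3^9$ of $W_3^9=0$ together with the factorization $V_{103}=G_4\,a_{121}^3$ (forcing $a_{121}=0$), sufficiency is established by exhibiting the $\varepsilon^3$-order approximate first integral $H_3$ of the form \eqref{Eqn39} (the paper records the explicit $f_{31},f_{32}$ via the $r_i$, $s_i$, $t_i$ of Appendix~B), and the $10$ limit cycles come from $V_{103}\ne0$ together with $\det_3^9=F_2\ne0$. Your closing remark about why no upper bound is asserted (the obstruction $a_{121}=0$ lives at $\varepsilon$-order, so Theorem~\ref{Thm1} does not apply) is a fair reading of why Theorem~\ref{Thm4} is stated more weakly than Theorems~\ref{Thm2} and~\ref{Thm3}.
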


\begin{proof}
Similarly again, we only need to prove {\it sufficiency}.
Under the condition $\{\rm S_{1c}^8, S_{2c}^8, S_{3c}^9\}$, 
we obtain the following $\varepsilon^3$-order
approximation of first integral, 
\begin{equation}\label{Eqn39} 
H_3(x,y,\varepsilon)=\frac{f_1+\varepsilon a_{211}r_1
+\varepsilon^2 (a_{122}r_1+a_{212}r_2+a_{211}^2s_2)
+\varepsilon^3 f_{31}}
{f_2+\varepsilon a_{211}r_4
+\varepsilon^2 (a_{122}r_3+a_{212}r_4+a_{211}^2s_5)
+\varepsilon^3 f_{32}}, 
\end{equation}
where 
\begin{equation*}
\begin{split}
&f_{31}=a_{123}r_{1}+a_{213}r_{2}
+a_{211}(a_{122}t_1+a_{212}t_2+a_{211}^2t_3),\\
&f_{32}=a_{123}r_{3}+a_{213}r_{4}
+a_{211}(a_{122}t_4+a_{212}t_5+a_{211}^2t_6),
\end{split}
\end{equation*}
in which $r_i,\, i=1,2,3,4$ are given in \eqref{Eqn33}, 
and $s_2$, $s_5$ and $t_i,\, i=1,2, \dots, 6$ are listed in Appendix B. 
This implies that setting 
$V_{i3}=0$, $0\le i\le 10$ yields $V_{i3} \! =\! 0$ for all $ i \! \ge \! 11$. 
Then, there exist at most $10$ small limit cycles 
for this case. On the other hand, $10$ small limit cycles exist 
since when $a_{121} \! \ne \! 0$, $V_{101} \! \ne \! 0$ and 
$ \det_3^9 \! \ne \! 0$. 
\end{proof}

\subsection{$\varepsilon^4$--$\varepsilon^6$-order analysis} 

The analyses for $\varepsilon^4$-,  $\varepsilon^5$- 
and $\varepsilon^6$-order are  similar to that 
of $\varepsilon^1$-, $\varepsilon^2$- and $\varepsilon^3$-order, 
respectively. 

Let $\{ {\rm S_{1c}^8, S_{2c}^8, S_{3c}^9} \}$ hold.
Following the same procedure used in the previous sections, we 
can solve the equations $W_4^8=0$ to obtain a solution $S_4^8$ such that 
\begin{equation}\label{Eqn40} 
V_{94} = G_1 \, a_{034}, \ \  
V_{104} = G_2 \, a_{034}, \ \  
V_{114} = G_3 \, a_{034}, \ \ {\det}_{4}^{8} = F_1 \ne 0,   
\end{equation} 
which has the exactly same form of the equations as those given in 
\eqref{Eqn30} and \eqref{Eqn34}, implying that perturbing the 
$\varepsilon^4$-order focus values from the solution $S_4^8$
and $b_{014}$ (see \eqref{Eqn22}) 
can yield $9$ limit cycles around the equilibrium ${\rm E_0}$. 
Similarly, the solution $S_4^8$ and $b_{014}$ with $a_{034} \! =\! 0$  
yields a critical condition ${\rm S_{4c}^8} $, under which 
the equilibrium ${\rm E_0}$ is a center up to $\varepsilon^4$ order.  

Then let $\{ {\rm S_{1c}^8, S_{2c}^8, S_{3c}^9,S_{4c}^8} \}$ hold.
In the same line, we can solve the equations 
$W_5^8=0$ to obtain a solution $S_5^8$ such that 
\begin{equation}\label{Eqn41}  
\begin{array}{rl} 
V_{95} = \!\!\! & G_1 \,A_{035}, \ \ 
V_{105} = G_2\,  A_{035}, \ \ 
V_{115} = G_3\,  A_{035}, \ \ 
{\det}_5^{8} = F_1 \ne 0,  
\end{array}
\end{equation}
where 
\begin{equation}\label{Eqn42} 
A_{035} = a_{035}
+ \textstyle\frac{1}{48}\, a_{122}\, a_{211} 
\, (140 \, a_{122}+35 \, a_{211}^2). 
\end{equation}
This shows that perturbing the $\varepsilon^5$-order 
focus values near the solution $S_5^8$ 
and $b_{015}$ given in \eqref{Eqn22} 
can also yield $9$ limit cycles around the equilibrium ${\rm E_0}$. 
It is easy to see that the solution of $A_{035} = 0$, 
\begin{equation}\label{Eqn43}  
a_{035} = -\, \textstyle\frac{35}{48}\, a_{122}\, a_{211} 
\, (4 \, a_{122}+ a_{211}^2), 
\end{equation} 
yields $V_{95}= V_{105} = V_{115} = 0$.
Now, we combine the solution $S_5^8$, $b_{015}$ and $a_{035}$ 
to obtain the critical condition ${\rm S_{5c}^8}$, under which
the equilibrium ${\rm E_0}$ becomes 
a center up to $\varepsilon^5$ order.  

The lengthy critical conditions ${\rm S_{4c}^8 }$ and ${\rm S_{5c}^8 }$ are 
omitted here for brevity. 
Summarizing the above results leads to the following theorem.

\begin{theorem}\label{Thm5}
System \eqref{Eqn11} can have maximal $9$ limit cycles 
around the equilibrium ${\rm E_0}$ under 
the condition $\{\rm S_{1c}^8, S_{2c}^8, S_{3c}^9\}$ 
by perturbing the $\varepsilon^4$-order focus values
around the critical value ${\rm S_{4c}^8}$; 
and under the critical condition 
$\{\rm S_{1c}^8, S_{2c}^8, S_{3c}^9, S_{4c}^8\}$ 
by perturbing the $\varepsilon^5$-order 
focus values near the critical point ${\rm S_{5c}^8}$. 
The equilibrium ${\rm E_0}$ becomes a center up to $\varepsilon^4$ order  
under the condition $\{\rm S_{1c}^8, S_{2c}^8, S_{3c}^9, S_{4c}^8\}$, 
and a center up to $\varepsilon^5$ order under the condition 
$\{\rm S_{1c}^8, S_{2c}^8, S_{3c}^9, S_{4c}^8, S_{5c}^8\}$.  
\end{theorem}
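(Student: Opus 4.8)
The plan is to follow the same three-step template that produced Theorems \ref{Thm2}--\ref{Thm4}, applied now to the $\varepsilon^4$- and $\varepsilon^5$-order data, and then to invoke Theorem \ref{Thm1} for the upper bounds. First, assuming $\{\rm S_{1c}^8, S_{2c}^8, S_{3c}^9\}$, I would record the computed $\varepsilon^4$-order focus values $V_{i4}$ (linear in $a_{ij4}, b_{ij4}$), solve the eight equations $W_4^8=0$ for $S_4^8$, and verify the displayed relations \eqref{Eqn40}: namely $V_{94}=G_1 a_{034}$, $V_{104}=G_2 a_{034}$, $V_{114}=G_3 a_{034}$ and ${\det}_4^8=F_1\neq 0$. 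Since $G_1\neq 0$ for $a<-2^{5/4}$, choosing $a_{034}\neq 0$ near the solution $S_4^8$ together with $b_{014}$ from \eqref{Eqn22} gives $V_{04}=\cdots=V_{84}=0$, $V_{94}\neq 0$, and the nonvanishing Jacobian determinant ${\det}_4^8$ lets us perturb to get $9$ simple zeros of $V_4(r)$, hence $9$ small limit cycles around ${\rm E_0}$. The same argument with $k=5$, using \eqref{Eqn41}--\eqref{Eqn42} and the rescaled coefficient $A_{035}$ in place of $a_{035}$, yields $9$ limit cycles near ${\rm S_{5c}^8}$.

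For the sharp upper bounds and the ``center'' assertions, I would set $a_{034}=0$ (respectively $A_{035}=0$, i.e.\ $a_{035}=-\tfrac{35}{48}a_{122}a_{211}(4a_{122}+a_{211}^2)$ as in \eqref{Eqn43}). This forces $V_{94}=V_{104}=V_{114}=0$ and, combined with $S_4^8$ and $b_{014}$, defines the critical set ${\rm S_{4c}^8}$; similarly ${\rm S_{5c}^8}$. The essential point is then to exhibit an $\varepsilon^4$- (resp.\ $\varepsilon^5$-) order approximate first integral of the form \eqref{Eqn24}: an analytic $H_4=(f_1+\varepsilon f_{11}+\varepsilon^2 f_{12}+\varepsilon^3 f_{31}+\varepsilon^4 f_{14})/(f_2+\cdots+\varepsilon^4 f_{24})$ satisfying the defining identity modulo $O(\varepsilon^5)$, built by appending one more layer to $H_3$ from \eqref{Eqn39} with the new numerator/denominator corrections $f_{14},f_{24}$ polynomial in $x,y$ and the $\varepsilon^4$-order parameters. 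Existence of such an $H_4$ shows $V_4(r)\equiv 0$ on ${\rm S_{4c}^8}$, so all higher $V_{i4}$ vanish once $V_{04},\dots,V_{94}$ do; since every $V_{i4}$ is linear in $(a_{ij4},b_{ij4})$ and ${\rm S_{4c}^8}$ is the unique common zero of $V_{04},\dots,V_{94}$, Theorem \ref{Thm1} (with $m=10$) gives at most $9$ limit cycles globally. The same construction with one further order gives $H_5$ and the $\varepsilon^5$ conclusions.

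The main obstacle is purely computational: producing the correction terms $f_{14},f_{24}$ (and, for the $\varepsilon^5$ case, $f_{15},f_{25}$) that solve \eqref{Eqn24} to the required order. This amounts to solving a linear system for the polynomial coefficients of $f_{14},f_{24}$ whose solvability is exactly the statement ``the $\varepsilon^4$-order obstruction vanishes on ${\rm S_{4c}^8}$'' — which is guaranteed precisely because we defined ${\rm S_{4c}^8}$ by killing $V_{94},V_{104},V_{114}$ and because $V_k(r)\equiv 0$ for $k\le 3$ already holds by Theorems \ref{Thm2}--\ref{Thm4}. I would carry this out with the Maple package of \cite{Yu1998}, and relegate the explicit $f_{14},f_{24},f_{15},f_{25}$ (like the $s_i,t_i$) to an appendix, noting in the text only that the verification is a direct substitution. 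The uniqueness of ${\rm S_{4c}^8}$ and ${\rm S_{5c}^8}$ as solution sets of the first ten focus-value equations — needed to apply Theorem \ref{Thm1} — follows from the nonsingularity of ${\det}_4^8=F_1\neq 0$ and ${\det}_5^8=F_1\neq 0$, exactly paralleling the $\varepsilon^1$- and $\varepsilon^2$-order arguments; I would state this explicitly and then declare the proof complete, since the remaining steps are verbatim repetitions of the earlier proofs.
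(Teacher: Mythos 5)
Your proposal follows the paper's argument essentially verbatim: solve $W_k^8=0$ to verify \eqref{Eqn40}--\eqref{Eqn41}, obtain $9$ limit cycles from $a_{034}\neq 0$ (resp.\ $A_{035}\neq 0$) together with ${\det}_k^{8}=F_1\neq 0$, then set $a_{034}=0$ (resp.\ impose \eqref{Eqn43}) and construct $\varepsilon^4$- and $\varepsilon^5$-order approximate first integrals via \eqref{Eqn24} for the center statements, which is exactly what the paper does and then omits per Remark~\ref{Rem2}. One caution: the solvability of the linear system for $f_{14},f_{24}$ is \emph{not} automatically ``guaranteed'' by having killed $V_{94},V_{104},V_{114}$ --- showing that the finitely many computed focus-value conditions suffice for an order-$\varepsilon^4$ first integral to exist is precisely the nontrivial (if mechanical) content of the sufficiency direction, which must be verified by the explicit Maple construction you describe rather than inferred from the definition of ${\rm S_{4c}^8}$.
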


\begin{rmk}\label{Rem2} 
The proof for the center conditions in Theorem~\ref{Thm5} 
is similar to that in proving Theorems~\ref{Thm2}, \ref{Thm3}
and \ref{Thm4} by finding 
the $\varepsilon^4$-order and $\varepsilon^5$-order approximations 
of the first integrals. This is the major and tedious part. 
For higher-order analysis, the proofs are similar. 
We omit the detailed proofs in the following analysis for brevity. 
\end{rmk} 

Next, suppose the condition 
$\{\rm S_{1c}^8, S_{2c}^8, S_{3c}^9, S_{4c}^8, S_{5c}^8\}$ 
is satisfied, then all 
$\varepsilon^k,\, k=1,2, \dots, 5$, order focus values vanish. 
Following a similar analysis as that for $\varepsilon^3$ order, we 
solve the equations $W_6^9=0$ 
to obtain a solution $S_6^9 $ such that 
\begin{equation}\label{Eqn44}  
\begin{array}{rl} 
V_{106} = \!\!\! & G_4 \, 
a_{122}^2 \big( a_{122} + \frac{9}{8}\, a_{211}^2 \big) , \\[1.0ex]   
V_{116} = \!\!\! & G_5 \, 
a_{122}^2 \big( a_{122} + \frac{9}{8}\, a_{211}^2 \big) , \\[1.0ex]   
V_{126} = \!\!\! & G_6 \, 
a_{122}^2 \big( a_{122} + \frac{9}{8}\, a_{211}^2 \big) , \quad 
{\det}_6^{9} = F_2 \ne 0,   
\end{array} 
\end{equation} 
which indeed shows the existence of $10$ limit cycles around the 
equilibrium ${\rm E_0}$, generated from perturbing the $\varepsilon^6$-order
focus values near the solution $S_6^9$ 
under the condition 
$\{\rm S_{1c}^8, S_{2c}^8, S_{3c}^9, S_{4c}^8, S_{5c}^8\}$. 
Moreover, when $ a_{122} \!=\! -\frac{9}{8}\, a_{211}^2 $ or $ a_{122} \!=\!0$, 
we have $ V_{106} \!=\! V_{116} \!=\! V_{126} \!=\! 0$, 
indicating that the solution $S_6^9$ with either $ a_{122} \!=\! 
-\frac{9}{8}\, a_{211}^2 $ or $ a_{122} \!=\! 0$, plus 
$b_{016}$ given by \eqref{Eqn22}$|_{k=6}$, yields a 
critical condition ${\rm S_{6c}^{9a}}$ (corresponding to 
the former) or ${\rm S_{6c}^{9b}}$ (corresponding to the latter) 
under which all $\varepsilon^6$-order focus values vanish. 
Thus, under the critical condition 
$\{\rm S_{1c}^8, S_{2c}^8, S_{3c}^9, S_{4c}^8, S_{5c}^8, S_{6c}^9\}$
(${\rm S_{6c}^9 } $ 
equals either ${\rm S_{6c}^{9a}}$ or ${\rm S_{6c}^{9b}}$), 
the equilibrium ${\rm E_0}$ becomes a center 
up to $\varepsilon^6$ order. 

We have the following theorem for this order. 

\begin{theorem}\label{Thm6}
System \eqref{Eqn11} can have maximal $10$ limit cycles 
bifurcating from the equilibrium ${\rm E_0}$ under 
the condition 
$\{\rm S_{1c}^8, S_{2c}^8, S_{3c}^9, S_{4c}^8, S_{5c}^8\}$ 
by perturbing the $\varepsilon^6$-order 
focus values near the critical point ${\rm S_{6c}^{9a}}$ 
or ${\rm S_{6c}^{9b}}$. 
Further, the equilibrium ${\rm E_0}$ becomes a center 
up to $\varepsilon^6$ order under the condition 
$\{\rm S_{1c}^8, S_{2c}^8, S_{3c}^9, S_{4c}^8$, $S_{5c}^8, S_{6c}^9 \}$, 
for which all $\varepsilon^k$-order {\rm (}$k=1,2, \dots, 6${\rm )} 
focus values vanish. 
\end{theorem}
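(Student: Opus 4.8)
The plan is to mirror the $\varepsilon^3$-order argument of Theorem~\ref{Thm4}, now carried out at order $\varepsilon^6$ (the case in which the relevant determinant is of the type $\det_6^9=F_2$), supplying two new ingredients: the $\varepsilon^6$-order approximate first integral, and the identification of the two solvability conditions ${\rm S_{6c}^{9a}}$ and ${\rm S_{6c}^{9b}}$. Throughout we assume $\{{\rm S_{1c}^8},{\rm S_{2c}^8},{\rm S_{3c}^9},{\rm S_{4c}^8},{\rm S_{5c}^8}\}$, so that $V_k(r)\equiv 0$ for $1\le k\le 5$ by Theorem~\ref{Thm5}; hence, by \eqref{Eqn5}--\eqref{Eqn8}, whenever $V_6(r)\not\equiv 0$ the displacement function is $d(\rho)=2\pi\,V_6(\rho)\,T_0(\rho)^{-1}\varepsilon^6+O(\varepsilon^7)$, and counting small limit cycles around ${\rm E_0}$ reduces to counting the isolated positive zeros of $V_6(\rho)=\sum_{i\ge 0}V_{i6}\,\rho^{2i+1}$ for $0<\rho\ll 1$. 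As recorded in \eqref{Eqn44}, the nine equations $W_6^9=0$ are linear in the $\varepsilon^6$-order parameters and, since $\det_6^9=F_2\ne 0$, possess a unique solution $S_6^9$, along which $V_{106}=G_4\,a_{122}^2(a_{122}+\frac{9}{8}a_{211}^2)$, $V_{116}=G_5\,a_{122}^2(a_{122}+\frac{9}{8}a_{211}^2)$ and $V_{126}=G_6\,a_{122}^2(a_{122}+\frac{9}{8}a_{211}^2)$, while $V_{06}=0$ via $b_{016}$ from \eqref{Eqn22}.

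For the \emph{existence} of ten limit cycles, fix the free lower-order parameters so that $a_{122}\ne 0$ and $a_{122}\ne -\frac{9}{8}a_{211}^2$; then $V_{106}\ne 0$ since $G_4\ne 0$, and, because $\det_6^9\ne 0$, one perturbs the entries of $S_6^9$ together with $b_{016}$ in the usual reversed order so that $V_{96},V_{86},\dots,V_{06}$ become successively small of alternating signs, which yields exactly ten simple zeros of $V_6(\rho)$ for $0<\rho\ll 1$, i.e.\ ten small limit cycles near ${\rm E_0}$. For the \emph{upper bound}, note from \eqref{Eqn44} that $V_{106},V_{116},V_{126}$ are all proportional to $a_{122}^2(a_{122}+\frac{9}{8}a_{211}^2)$, so imposing $V_{i6}=0$ for $0\le i\le 10$ forces, along the branch $S_6^9$ singled out by $\det_6^9\ne 0$, either $a_{122}=0$ or $a_{122}=-\frac{9}{8}a_{211}^2$, i.e.\ ${\rm S_{6c}^{9b}}$ or ${\rm S_{6c}^{9a}}$. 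In either of these two cases I will construct below an analytic $H_6(x,y,\varepsilon)$ satisfying \eqref{Eqn24} with $K=7$, whence $\frac{dr}{dt}=O(\varepsilon^7)$ and therefore $V_{i6}=0$ for \emph{all} $i$, in particular for $i\ge 11$. Consequently, exactly as in the proof of Theorem~\ref{Thm4} (the coefficients $V_{06},\dots,V_{106}$ controlling all the remaining ones), at most ten limit cycles can bifurcate from ${\rm E_0}$ near ${\rm S_{6c}^{9a}}$ or ${\rm S_{6c}^{9b}}$, so ten is the maximum.

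It remains to construct $H_6$ and to deduce the center statement. \emph{Necessity} of $\{{\rm S_{1c}^8},\dots,{\rm S_{5c}^8},{\rm S_{6c}^9}\}$: a center up to $\varepsilon^6$ order forces all $\varepsilon^k$-order focus values to vanish for $k\le 5$, hence $\{{\rm S_{1c}^8},\dots,{\rm S_{5c}^8}\}$ by Theorem~\ref{Thm5}, and then $V_{i6}=0$ for all $i$ forces $S_6^9$ and $V_{106}=0$, i.e.\ ${\rm S_{6c}^{9a}}$ or ${\rm S_{6c}^{9b}}$, which together constitute ${\rm S_{6c}^9}$. \emph{Sufficiency}: under $\{{\rm S_{1c}^8},{\rm S_{2c}^8},{\rm S_{3c}^9},{\rm S_{4c}^8},{\rm S_{5c}^8},{\rm S_{6c}^9}\}$ I would seek
\[
H_6(x,y,\varepsilon)=\frac{f_1+\varepsilon f_{11}+\varepsilon^2 f_{12}+\cdots+\varepsilon^6 f_{16}}{f_2+\varepsilon f_{21}+\varepsilon^2 f_{22}+\cdots+\varepsilon^6 f_{26}},
\]
in the same spirit as \eqref{Eqn32}, \eqref{Eqn36} and \eqref{Eqn39}, where the coefficients $f_{1j},f_{2j}$ with $1\le j\le 5$ are inherited (in their specialized form) from the $\varepsilon^5$-order approximate first integral underlying Theorem~\ref{Thm5} (cf.\ Remark~\ref{Rem2}), themselves built recursively from the ingredients $r_i,s_i,t_i$ of \eqref{Eqn33} and Appendices~A--B, and $f_{16},f_{26}$ are the new unknowns. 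Inserting this ansatz into \eqref{Eqn24} and requiring the coefficient of $\varepsilon^6$ on the left-hand side to vanish yields a linear system for the polynomial coefficients of $f_{16}$ and $f_{26}$; the claim is that this system is solvable precisely when ${\rm S_{6c}^9}$ holds, the two sub-cases ${\rm S_{6c}^{9a}}$ and ${\rm S_{6c}^{9b}}$ producing two slightly different $H_6$'s. Once $H_6$ is in hand, \eqref{Eqn24} holds with $K=7$, so by the claim following \eqref{Eqn24} we get $\frac{dr}{dt}=O(\varepsilon^7)$; thus all $\varepsilon^k$-order focus values vanish for $k\le 6$, and ${\rm E_0}$ is a center up to $\varepsilon^6$ order.

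The hard part is the explicit construction of $H_6$: one must push the computation in \eqref{Eqn24} order by order all the way to $\varepsilon^6$ under the reduced perturbations \eqref{Eqn20}--\eqref{Eqn21} and the five nested critical conditions, and the polynomial data grow considerably relative to \eqref{Eqn33} and Appendix~B, so a computer algebra system is indispensable. A secondary but genuinely needed point is to confirm that, near the two critical sets, $S_6^9$ together with $b_{016}$ is the \emph{only} branch on which $V_{06}=\cdots=V_{96}=0$, which is exactly what $\det_6^9=F_2\ne 0$ provides, so that the vanishing of $V_{106}$ really does pin down ${\rm S_{6c}^{9a}}$ or ${\rm S_{6c}^{9b}}$ and both the upper bound and the necessity half of the center statement go through. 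As in Remark~\ref{Rem2}, the remaining verifications are routine but lengthy, and would be summarized rather than displayed in full.
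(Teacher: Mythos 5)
Your proposal follows exactly the strategy the paper itself uses (and, per Remark~\ref{Rem2}, largely omits): solve $W_6^9=0$ using $\det_6^9=F_2\neq 0$, read off the existence of ten limit cycles from $V_{106}\neq 0$ when $a_{122}^2(a_{122}+\tfrac{9}{8}a_{211}^2)\neq 0$, identify ${\rm S_{6c}^{9a}}$ and ${\rm S_{6c}^{9b}}$ from the common factor in $V_{106},V_{116},V_{126}$, and establish the center statement via an $\varepsilon^6$-order approximate first integral of the form \eqref{Eqn24}, in parallel with the proofs of Theorems~\ref{Thm2}--\ref{Thm4}. The construction of $H_6$ that you defer is precisely the computation the authors also leave unstated, so your outline is correct and essentially coincides with the paper's intended proof.
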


Suppose the condition 
$\{\rm S_{1c}^8, S_{2c}^8, S_{3c}^9, S_{4c}^8, S_{5c}^8, S_{6c}^9 \}$ holds. 
Then, all the $\varepsilon^k$-order 
($k=1,2, \dots, 6$) focus values vanish. We have two cases 
for higher-order analysis, defined as 
\begin{equation}\label{Eqn45}
\begin{array}{ll}  
{\rm Case \ (A) \ \ 
\{ S_{1c}^8, S_{2c}^8, S_{3c}^9, S_{4c}^8, S_{5c}^8, S_{6c}^{9a}\}}, \\[1.0ex] 
{\rm Case \ (B) \ \ 
\{S_{1c}^8, S_{2c}^8, S_{3c}^9, S_{4c}^8, S_{5c}^8, S_{6c}^{9b}\}}. 
\end{array} 
\end{equation}

\subsection{Higher-order analysis for Case (A)} 

First we consider Case (A), under which 
we will show that $11$ limit cycles can bifurcate from the equilibrium 
${\rm E_0}$ based on the $\varepsilon^7$-order focus values. 

\subsubsection{$\varepsilon^7$-order analysis} 

Under the condition (A) defined in \eqref{Eqn45} 
with $ a_{122} = - \frac{9}{8} a_{211}^2$, we obtain 
\begin{equation}\label{Eqn46} 
{\det}_7^{10} = F_3\, a_{211}^4 , \quad 
{\det}_7^{11} = F_4\, a_{211}^{10}, 
\end{equation}  
which shows that ${\det}_7^{10} \ne 0 $ and 
$ {\det}_7^{11} \ne 0 $ when $a_{211} \ne 0$, implying that 
we may have solutions such that the first ten focus values vanish
but $V_{117} \ne 0$ and so $11$ small limit cycles may be obtained.   
Indeed, we can solve the first ten focus values 
equations: $W_k^{10} = 0$ to obtain a solution $S_7^{10}$ such that  
\begin{equation}\label{Eqn47} 
V_{117} = G_7 \, a_{211}^7, \ \  
V_{127} = G_8 \, a_{211}^7, \ \  
V_{137} = G_9 \, a_{211}^7, 
\end{equation} 
which clearly shows that $V_{117} \ne 0 $ if $a_{211} \ne 0$. 
In addition, due to ${\det}_7^{10} \ne 0$ when $a_{211} \ne 0$,
implying that $11$ small limit cycles exist. 

Letting $a_{211} \!=\! 0 $, we have $V_{117} \!=\! V_{127} 
\!=\! V_{137} \!=\! 0$, leading to a critical condition ${\rm S_{7c}^{10}}$,
defined by 

\begin{equation}\label{Eqn48}
{\rm S_{7c}^{10}}\!: \left\{\! 
\begin{array}{rl}
b_{017} = \!\!\! & 
C_1 a_{127} 
\!-\! \frac{9}{8} a^3 a_{217}  
\!+\! C_8 C_{28} \!+\! \frac{9}{32} a^4 C_{29} 
\!+\! C_{23} C_{30} ,  \\[1.0ex] 
a_{211} = \!\!\! & a_{123} = a_{037} = 0, \hspace*{0.27in} b_{037} 
= \frac{8}{3} \, a_{127}, \\[1.0ex] 
b_{027} = \!\!\! & -6 a_{127}  +  C_{19} C_{28}, \quad 
b_{217} = -\frac{9a}{16} (8 C_{28} + C_{30} ), 
\\[1.0ex] 
b_{127} = \!\!\! & \frac{7}{2} a_{217} 
+ \frac{45}{32} ( 8 C_{28} + C_{30} ), \\[1.0ex] 
b_{107} = \!\!\! & C_2 a_{127} + C_3 a_{217} 
+ 2 C_{11} C_{29} + C_{12} C_{28} + C_{25} C_{30}, \\[1.0ex] 
b_{207} = \!\!\! & C_4 a_{127} + C_5 a_{217} 
+ 2 C_{14} C_{29} + C_{15} C_{28} 
        + C_{26} C_{30} , \\[1.0ex]  
b_{117} = \!\!\! & 9 a\, a_{127}+ \frac{9}{2} \, a_{217} 
+ \frac{9 a^3}{16} C_{29} + C_{17} C_{28} + C_{24} C_{30} , \\[1.0ex] 
b_{307} = \!\!\! & C_6 a_{127} + C_7 a_{217}  
+ 2 C_{21} C_{29} + C_{22} C_{28} 
        + C_{27} C_{30}, 
\end{array} 
\right. 
\end{equation} 
where $C_i$'s are given in Appendix A. 

We have the following result. 

\begin{theorem}\label{Thm7}
Let ${\rm\{ S_{1c}^8, 
S_{2c}^8, S_{3c}^9, S_{4c}^8, S_{5c}^8, S_{6c}^{9a}\}}$ hold.
The equilibrium ${\rm E_0}$ of \eqref{Eqn11} becomes a center 
up to $\varepsilon^7$ order under 
${\rm S_{7c}^{10}}$ for which all $\varepsilon^7$-order 
focus values vanish. Furthermore, there exist 
$11$ small limit cycles around ${\rm E_0}$ 
for parameter values of $a_{ij7}$ and $b_{ij7}$ near the critical point 
${\rm S_{7c}^{10}}$. 
\end{theorem}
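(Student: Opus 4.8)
The plan is to follow the now-standard two-part template established in the proofs of Theorems \ref{Thm2}--\ref{Thm4} and summarised in Remark \ref{Rem2}: first prove that under ${\rm S_{7c}^{10}}$ the equilibrium ${\rm E_0}$ is a center up to $\varepsilon^7$ order, and second prove that $11$ small limit cycles bifurcate near ${\rm S_{7c}^{10}}$. For the center part, I would invoke \eqref{Eqn24}: it suffices to exhibit an analytic function $H_7(x,y,\varepsilon)$ of the Darboux-rational form
\begin{equation*}
H_7(x,y,\varepsilon)=\frac{f_1+\varepsilon f_{11}+\cdots+\varepsilon^7 f_{17}}
{f_2+\varepsilon f_{21}+\cdots+\varepsilon^7 f_{27}},
\end{equation*}
whose first six correction pairs $(f_{1j},f_{2j})$, $j=1,\dots,6$, are inherited from $H_6(x,y,\varepsilon)$ (built in the $\varepsilon^6$-order analysis under Case (A)), and whose seventh pair $(f_{17},f_{27})$ is chosen so that substituting $H_7$ into the perturbed vector field yields $O(\varepsilon^8)$. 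Concretely, collecting the $\varepsilon^7$-coefficient in \eqref{Eqn24} produces a linear PDE for $(f_{17},f_{27})$ whose inhomogeneous term is a known polynomial determined by the lower-order data and by the perturbation coefficients $a_{ij7},b_{ij7}$ evaluated on ${\rm S_{7c}^{10}}$; since ${\rm S_{7c}^{10}}$ was defined precisely as the solution locus of $V_{i7}=0$, $i=0,\dots,10$ (equivalently $a_{211}=a_{123}=a_{037}=0$ together with the displayed expressions in \eqref{Eqn48}), the obstruction to solving this PDE in the Darboux class vanishes, and $(f_{17},f_{27})$ can be written down explicitly (the analogues of the $r_i,s_i,t_i$ tables in Appendices A and B). From $H_7=h$ parameterising the Poincar\'e section, \eqref{Eqn8} with $K=8$ (or rather the observation that $dr/dt=O(\varepsilon^8)$) forces all $\varepsilon^7$-order focus values to vanish, so ${\rm E_0}$ is a center up to $\varepsilon^7$ order; conversely ${\rm S_{7c}^{10}}$ is necessary because $V_{117}=G_7a_{211}^7$, $V_{127}=G_8a_{211}^7$, $V_{137}=G_9a_{211}^7$ from \eqref{Eqn47} together with $V_{i7}=0$, $i=0,\dots,10$, already force $a_{211}=0$ and hence the full condition \eqref{Eqn48}.

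For the limit-cycle part, I would argue exactly as in the lower-order theorems. Under Case (A) with $a_{122}=-\tfrac{9}{8}a_{211}^2$ and $a_{211}\neq0$, equation \eqref{Eqn46} gives ${\det}_7^{10}=F_3\,a_{211}^4\neq0$ (recall $F_3\neq0$ for $a<-2^{-5/4}$ by the remark after \eqref{Eqn27}), so the map $S_7^{10}\mapsto W_7^{10}$ has full rank $10$ at the solution locus; hence, after solving $W_7^{10}=0$ for $S_7^{10}$ and using \eqref{Eqn22} to kill $V_{07}$, one can independently and freely assign small nonzero values to $V_{07},V_{17},\dots,V_{97}$, while \eqref{Eqn47} guarantees $V_{107}=V_{117}=V_{127}=\cdots$ are all proportional to $a_{211}^7$ through the remaining free parameter, so $V_{107}$ (hence the leading coefficient of the displacement function, after the vanishing of $V_{07},\dots,V_{97}$) can be made nonzero and of opposite sign to a small perturbation of $V_{97}$, etc. The standard sign-alternation / nested-intervals argument on the isolated zeros of $V_7(\rho)$ for $0<\rho\ll1$ (equivalently on $M_7(h)$ via Remark \ref{Rem1}(1)) then yields $11$ small limit cycles around ${\rm E_0}$ for suitable $(\varepsilon,\delta)$ near ${\rm S_{7c}^{10}}$. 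Since $V_{97}$ sits below $V_{107}$ in the ordering and ${\det}_7^{10}\neq0$ decouples the first ten focus values, the eleventh cycle is produced by the $\varepsilon^7$-order coefficient $V_{117}=G_7a_{211}^7$ being genuinely nonzero; no eleventh-order determinant condition is needed because ${\det}_7^{11}=F_4\,a_{211}^{10}\neq0$ is only used to confirm that no twelfth cycle is forced at this order (and indeed the paper's later discussion shows $11$ is the ceiling).

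The main obstacle is the first part: constructing the seventh-order Darboux correction $(f_{17},f_{27})$ and verifying that \eqref{Eqn24} holds to $O(\varepsilon^8)$. Because the lower-order integrals $H_1,\dots,H_6$ accumulate the quadratic, cubic, and higher monomial corrections built on $r_i$, $s_i$, $t_i$ (Appendices A, B), the inhomogeneous term of the $\varepsilon^7$ linear PDE is an unwieldy polynomial in $x,y$ with coefficients that are rational functions of $a$ of very high degree — exactly the kind of computation that forced the authors to rely on computer algebra. The conceptual content, however, is routine once one trusts that the solvability condition for the Darboux-class PDE is \emph{precisely} the vanishing of the focus values $V_{i7}$, $i=0,\dots,10$; this is the same mechanism that worked at orders $1$ through $6$, and there is no new phenomenon at order $7$ other than sheer algebraic size. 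For this reason I would, following Remark \ref{Rem2}, record the explicit $(f_{17},f_{27})$ in an appendix (or suppress it entirely, citing the lower-order pattern) and devote the proof text only to (i) the necessity direction via \eqref{Eqn47}, (ii) the citation of \eqref{Eqn24} plus \eqref{Eqn8}, and (iii) the rank/sign argument using ${\det}_7^{10}=F_3a_{211}^4\neq0$ and $V_{117}=G_7a_{211}^7\neq0$.
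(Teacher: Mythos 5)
Your proposal follows essentially the same route as the paper: the paper gives no separate proof of Theorem~\ref{Thm7}, relying (per Remark~\ref{Rem2}) on an $\varepsilon^7$-order Darboux-rational approximate first integral via \eqref{Eqn24} for the center/sufficiency direction, on $V_{117}=G_7\,a_{211}^7$ from \eqref{Eqn47} for necessity of $a_{211}=0$, and on ${\det}_7^{10}=F_3\,a_{211}^4\neq0$ together with $V_{117}\neq0$ for the existence of $11$ limit cycles --- exactly the three ingredients you identify. One small indexing slip: since $W_7^{10}=(V_{17},\dots,V_{107})$, the independently adjustable focus values are $V_{07},\dots,V_{107}$ (eleven of them, via $b_{017}$ and $S_7^{10}$) with $V_{117}$ the first forced-nonzero coefficient, whereas in the middle of your limit-cycle paragraph you list only $V_{07},\dots,V_{97}$ and momentarily treat $V_{107}$ as the leading term, though your closing sentence corrects this.
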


\subsubsection{Higher-order analysis}

For higher-order analysis ($k \ge 8$), 
we first briefly list the results for a few orders to see the 
patterns and then summarize the results in a table for higher orders.  

The analysis on $\varepsilon^k$ ($k=8,9,10,11$) orders 
show the same pattern, giving 
$9$ limit cycles for each order, as follows: 
\begin{equation}\label{Eqn49} 
\hspace*{-0.20in} \begin{array}{cll} 
\begin{array}{c}
{\rm Order} \ k\!: \\ 
(k\!=\!8,9,10,11) 
\end{array} & \!\!\! \{S_k^8,W_k^8\},  
\!\!\! & \left\{\!\! \begin{array}{rl} 
V_{9k} = \!\!\!\! & G_1 \, A_{03k}, \ \  
V_{10k} = G_2 \, A_{03k}, \\ [0.7ex]  
V_{11k} = \!\!\!\! & G_3 \, A_{03k}, \ \ 
{\det}_{k}^{8} = F_1 ,
\end{array} 
\right.
\end{array}  
\end{equation} 
where $\{S_k^m,W_k^m\}$ denotes the solution $S_k^m$ solved from 
$W_k^m=0$, and 
$$ 
\begin{array}{rl} 
A_{038} = \!\!\! & a_{038}, \qquad A_{039} = a_{039}, \\[0.7ex] 
A_{0310} = \!\!\! & a_{0310} + \textstyle\frac{35}{48} 
\,  a_{124}\, a_{212} \, (4 \, a_{124}+ a_{212}^2), \\[1.0ex] 
A_{0311} = \!\!\! & a_{0311} 
+ \textstyle\frac{35}{48} \big[ a_{125} a_{212} (8 a_{124}+a_{212}^2) 
+ a_{124} a_{213} (4 a_{124} +3 a_{212}^2) \big]. 
\end{array}
$$ 
This clearly shows that for each order of $k=8,9,10,11$, one can 
solve $A_{03k}=0$ to get a unique solution for $ a_{03k}$ 
under which (together with the solutions $S_k^m$ 
and $b_{01k}$ obtained in the previous 
orders and the current order)
the equilibrium ${\rm E_0}$ becomes a center up to that order.  

When the equilibrium ${\rm E_0}$ is a center up to $11$th order, 
as given in \eqref{Eqn49}
we obtain the following result for order $12$:  
\begin{equation}\label{Eqn50} 
\hspace*{-0.15in} \begin{array}{cll} 
{\rm Order} \ 12\!:  
\!\! & \{S_{12}^9,W_{12}^9\},  
\!\!\! & \left\{\!\! \begin{array}{rl} 
V_{1012} = \!\!\!\! & G_4\, a_{124}^2 \big(a_{124} 
\!+\! \textstyle\frac{9}{8} a_{212}^2 \big), \\ [1.0ex]  
V_{1112} = \!\!\!\! & G_5\, a_{124}^2 \big(a_{124}
\!+\! \textstyle\frac{9}{8} a_{212}^2 \big) , \\ [1.0ex] 
V_{1212} = \!\!\!\! & G_6\, a_{124}^2 \big(a_{124}
\!+\! \textstyle\frac{9}{8} a_{212}^2 \big), \ \ 
{\det}_{12}^{9} = F_2 ,  
\end{array} 
\right.  
\end{array}
\end{equation} 
which has the exactly the same pattern as order $6$, shown in 
\eqref{Eqn44}, indicating that $10$ limit cycles can be obtained from 
this order, and there are two solutions from the 
equations $V_{1012} \! =\! V_{1112} \!=\! V_{1212} \! =\! 0$: 
$a_{124} \!=\! -\frac{9}{8} a_{212}^2 $ and $a_{124} \!=\! 0$, which are 
again similar to that as 
in order $6$. When $a_{124} = 0$, it will be shown 
in Section 4.7 that 
it yields the same pattern as that for Case (B) in higher orders.    
So in this section, we choose $ a_{124} \!=\! -\frac{9}{8} a_{212}^2$, 
like we chose $a_{122} \!=\! -\frac{9}{8} a_{212}^2$ in order 
$6$ to obtain the center condition. 

Let $ a_{124} \!=\! -\frac{9}{8} a_{212}^2$, under which (together with 
the solutions obtained from previous orders and this order) the equilibrium 
${\rm E_0}$ becomes a center up to $\varepsilon^{12}$ order. 
Then, we have the result for $\varepsilon^{13}$ order: 
\begin{equation}\label{Eqn51} 
\hspace*{-0.00in} \begin{array}{cll} 
{\rm Order} \ 13\!: 
& \!\!\! \{S_{13}^9,W_{13}^9\},  
\!\!\! & \left\{\!\! \begin{array}{rl} 
V_{1013} = \!\!\!\! & 
\textstyle\frac{81}{64} G_4\, a_{212}^4 
\big(a_{125}+ \textstyle\frac{9}{4} \,a_{212} a_{213} \big),  \\[1.0ex] 
V_{1113} = \!\!\!\! & \textstyle\frac{81}{64} G_5\, a_{212}^4 
\big(a_{125}+ \textstyle\frac{9}{4} \,a_{212} a_{213} \big),  \\[1.0ex] 
V_{1213} = \!\!\!\! & 
\textstyle\frac{81}{64} G_6\, a_{212}^4 
\big(a_{125}+ \textstyle\frac{9}{4} \,a_{212} a_{213} \big), \ \ 
{\det}_{13}^{9} = F_2 ,  
\end{array} 
\right.  
\end{array} 
\end{equation}  
which shows that perturbing $\varepsilon^{13}$-order focus values 
can also yield $10$ small limit cycles around the equilibrium ${\rm E_0}$. 
It can be seen from \eqref{Eqn51} that either $a_{212} \!=\! 0$ or 
$ a_{125} \!=\! -\frac{9}{4} a_{212} a_{213} $ leads to 
the equilibrium ${\rm E_0}$ being a center. However, it can be shown 
that setting $ a_{212} \!=\! 0$ at this order will not yield $11$ 
small limit cycles at the next order though it will resume the same pattern 
at higher orders. 

So let $ a_{125} \!=\! -\frac{9}{4} a_{212} a_{213} $. Then, we obtain 
the following result for $\varepsilon^{14}$ order: 
\begin{equation}\label{Eqn52} 
\hspace*{-0.20in} \begin{array}{cll} 
{\rm Order \, 14\!:} & \{S_{14}^{10},W_{14}^{10}\},  
& \left\{\!\! \begin{array}{rl} 
V_{1114} = \!\!\!\! & G_7 \, a_{212}^7, \ \ 
V_{1214} = G_8\,  a_{212}^7 , \\ [0.7ex] 
V_{1314} = \!\!\!\! & G_9\,  a_{212}^7, \ \ 
{\det}_{14}^{10} = F_3 \ne 0,  
\end{array} 
\right.  
\end{array} 
\end{equation} 
which shows that perturbing $\varepsilon^{14}$-order focus values
can yield $11$ limit cycles around the equilibrium ${\rm E_0}$, 
and setting $a_{212} = 0$ leads to ${\rm E_0}$ being a center 
up to $\varepsilon^{14}$ order. 
It has been noted that choosing $a_{212} \!=\! 0$ at order $13$ or $14$ makes
differences. More precisely, as shown in Table~\ref{table1}, if taking 
$ a_{125} \!=\! -\frac{9}{4} a_{212} a_{213} $ at 
order $13$, we have small limit cycles $11, \, 9, \, 9, \, 9, \,9$ 
for the orders $14$--$18$; while if taking $ a_{212} \!=\! 0$ at order $13$, 
then the limit cycles obtained for the orders $14$--$18$  
are $9,\, 10, \, 9, \, 9, \, 10$, and then the two different choices 
merge into the same pattern from order $19$. Note that the choice 
$a_{212} \!=\! 0$ at order $13$ does not yield $11$ small limit cycles 
at order $14$, but 
gives two more $10$ small limit cycles at orders $15$ and $18$. 
However, it returns to the general pattern at order $19$.
So we treat $a_{212} \!=\! 0$ as a special case of the 
case $ a_{125} \!=\! -\frac{9}{4} a_{212} a_{213} $. 

Summarizing the above results we have the following pattern: $11$ 
limit cycles are obtained from $\varepsilon^{7}$ order, then $9$ limit cycles 
from four consecutive $\varepsilon^{k}$ orders ($k=8,9,10,11$), and then 
$10$ limit cycles from two consecutive $\varepsilon^{k}$ orders ($k=12,13$), 
and finally return to $11$ limit cycles at $\varepsilon^{14}$ order.    
This pattern, starting from order $8$, four 
$9$ limit cycles, followed by two $10$ limit cycles, and then 
$11$ limit cycle, has been verified up to $\varepsilon^{35}$ order. 
We call this as $9^4$-$10^2$-$11^1$ 
{\it generic} pattern, and the corresponding 
solution (or center condition) is called {\it generic solution} 
(or generic center condition). 
By {\it generic} we mean that 
one should always choose a non-zero solution (if it exists) 
when one solves the center conditions at each order.  
Other types of solutions are called {\it non-generic}. 
For example, as discussed above, if choosing the non-generic solution 
$a_{212} \!=\! 0$ at order $13$, then $11$ limit cycles will be missed  
at order $14$ but the solution procedure will return to the 
generic $9^4$-$10^2$-$11^1$ pattern at order $19$.  
However, it should be noted that a non-generic solution in Case (A) 
does not always lead to the generic $9^4$-$10^2$-$11^1$ pattern. 
For instance, choosing the non-generic solution 
$a_{124} \!=\! 0$ at order $12$ will generate solutions 
in the form of generic patter of Case (B) at a higher order, 
as shown in the next section.

\begin{rmk}\label{Rem3}
It has been observed from the above analysis, the values of the 
parameter $a$ in the Hamiltonian function does not affect the number of 
limit cycles. In other words, $a$ can not be used to increase the 
number of bifurcating limit cycles. Thus, to simply the computations 
in higher order analysis, we set $a=-\,3$ in higher-order 
($k \ge 15$) computations, which greatly simplify the computations. 
\end{rmk} 

We summarize the results of Case (A) in Table~\ref{table1}, 
where $k$ is the order of $\varepsilon^k$ focus values, 
$(S_k^m,W_k^m)$ represents the solution $S_k^m$ solved from 
$ W_k^m \!=\! 0$, and ${\rm LC}$ denotes limit cycles 
around the equilibrium ${\rm E_0}$ obtained by perturbing the 
$\varepsilon^k$-order focus values. 
The ``Condition for Center'' in each row only lists the condition for 
the current row, which assumes that the conditions in the previous 
rows hold. For example, when $k\!=\! 4$, ${\rm S_{4c}^8}$ only gives the 
center condition for $k \!=\! 4$, which should be combined with the 
conditions given in the previous rows: ${\rm S_{1c}^8}$, 
${\rm S_{2c}^8}$ and ${\rm S_{3c}^9}$ to get a complete 
center condition $\{\rm S_{1c}^8,S_{2c}^8,S_{3c}^9,S_{4c}^8\}$. 
Note that the critical condition ${\rm S_{kc}^8}$ contains the solutions 
$S_k^8$, the  $b_{01k}$ given in \eqref{Eqn22} 
and a particular coefficient. For example, ${\rm S_{2c}^8} 
\!=\! \{S_2^8, b_{012}, a_{032} \}$, ${\rm S_{3c}^9} \!=\! \{S_3^9,
 b_{013} , a_{121} \} $, and 
${\rm S_{7c}^{10}} \!=\! \{S_7^{10},
b_{017} , a_{211} \} $, etc. 
The solutions of these key coefficients are given below. 
$$ 
\begin{array}{lll} 
{\rm 9 \ LC:} & k=1,2,4,8,9 & a_{03k}=0  \\  
& k=5  & a_{035}=-\frac{35}{48} a_{122}a_{211} (4 a_{122} + a_{211}^2)  
\\ [0.5ex]  
& k=10 & a_{0310}=-\frac{35}{48} a_{124}a_{212} 
(4 a_{124} + a_{212}^2) \\  [0.5ex] 
& k=11 & a_{0311} = -\frac{35}{48}
\big[ a_{212} a_{125} (8 a_{124}+a_{212}^2) \\ 
& & \qquad \qquad \quad +a_{213} a_{124} (4 a_{124}+3 a_{212}^2) \big]
\\[0.5ex]  
& k=15 & a_{0315}= -\frac{735}{256} a_{213}^5 \\ [0.5ex] 
& k=16 & a_{0316}= \frac{35}{768} a_{213}^3  
(128 a_{127} - 27 a_{214} a_{213}) \\ 
& k=17 & a_{0317} = -\frac{35}{768} a_{213} \big[ 
32 a_{127} (2 a_{127}-3 a_{214} a_{213}) \\ 
& & \qquad \qquad \quad -a_{213}^2 (128 a_{128} \!+\! 54 a_{214}^2
\!-\! 27 a_{215}  a_{213}) \big]  \\  [0.5ex] 
& k=18 & a_{0318} = \frac{35}{6} a_{213}^3 a_{129} 
        -\frac{35}{24} a_{213} a_{128} (4 a_{127}-3 a_{214} a_{213}) \\[0.5ex]
& & \qquad \quad 
- \frac{35}{48} a_{127} \big[ a_{214} (4 a_{127} \!+\! 3 a_{214} a_{213})
\!-\! 6 a_{213}^2 a_{215} \big] \\[0.5ex]  
& & \qquad \quad 
+ \frac{3}{4096} a_{213}^2 \big[ 1120 a_{214} (a_{214}^2+6 a_{215} a_{213})
\\[0.5ex] 
& & \qquad \qquad \qquad \quad +3 a_{213}^2 (2269 a_{213}^2-560 a_{216}) \big] 
  \\ [0.5ex] 
& k=22 & a_{0322}= \frac{35}{768} 
a_{214}^3 (128 a_{1210} \!-\! 486 a_{215}^2 \!-\! 27 a_{216} a_{214})\\[0.5ex]
& k=23 &  a_{0323} = \frac{35}{768} a_{214}^2 \big[ 
a_{214} (128 a_{1211} -27 a_{217} a_{214}) \\ [0.5ex] 
& & \qquad \qquad 
+a_{215} (384 a_{1210} -108 a_{214} a_{216} -198 a_{215}^2) \big] \\ [0.5ex] 
& k=24,25 &    a_{03k} = \cdots  \\ 
& k=29$--$32 & a_{03k} = \cdots  \\ [1.0ex] 
{\rm 10 \ LC:} & k=3 & a_{121} = 0 \\ 
& k=6 & a_{122} =-\frac{9}{8} a_{211}^2 \\ 
& k=12 & a_{124} =-\frac{9}{8} a_{212}^2  \\   
& k=13 & a_{125} =-\frac{9}{4} a_{213} a_{212} \\ 
& k=19 & a_{127} =-\frac{9}{4}\, a_{213}a_{214} \\  
& k=20 & a_{128} =-\frac{9}{8} (a_{214}^2+2 a_{215} a_{213}) \\ 
& k=26 & a_{1210} =-\frac{9}{8} (a_{215}^2+2 a_{216} a_{214}) \\ 
& k=27 & a_{1211} =-\frac{9}{4} (a_{217} a_{214}+a_{216} a_{215})  \\ 
& k=33 & a_{1213} =-\frac{9}{4} (a_{218} a_{215}+a_{217} a_{216})  \\ 
& k=34 & a_{1214} =-\frac{9}{8} (a_{217}^2 +2 a_{219} a_{215} 
+ 2 a_{218} a_{216}) \qquad \qquad \qquad \qquad \\ [1.0ex] 
{\rm 11 \ LC:} & k=7m &  \\  
&  m= 1$--$5 & a_{21m} = 0   
\end{array} 
$$
where `$ \cdots $' 
represents the omitted lengthy expressions for brevity. 
In addition, in Table~\ref{table1}, the blue and red colors denote 
the solutions and center conditions corresponding to the 
$10$ and $11$ small limit cycle, respectively. 

\begin{table}[!h]
\caption{Bifurcation of limit cycles for generic Case (A)}  
\centering 
\vspace*{0.15in} 
\label{table1}
\begin{tabular}{|c|c|c|c|}
\hline
$k$ & ($S_k^m,W_k^m$) & No. of LC & Condition for Center   \\
\hline
1,2 & $(S_k^8,W_k^8)$  & 9 & ${\rm S_{kc}^8}$  \\  
 3  & $(S_3^9,W_3^9)$  & {\color{green}{10}} 
& ${\color{green}{\rm S_{3c}^9}}$  \\  
 4,5 & $(S_k^8,W_k^8)$  & 9 & ${\rm S_{kc}^8}$  \\  
 6  & $(S_6^9,W_6^9)$  & {\color{green}{10}} 
&  ${\color{green}{\rm S_{6c}^9}}$  \\ 
 7 & $(S_7^{10},W_7^{10})$  & {\color{red}{11}} 
& ${\color{red}{\rm S_{7c}^{10}}}$ \\ 
            &  &  &  \vspace*{-0.15in}  \\ \hline  
            &  &  &  \vspace*{-0.15in}  \\ 
8--11 & $(S_k^8,W_k^8)$  & 9 & ${\rm S_{kc}^8}$  \\  
 12,13 & $(S_{12}^9,W_{12}^9)$ & {\color{green}{10}}
& ${\color{green}{\rm S_{kc}^9}} $  \\  
            &  &  &  \vspace*{-0.17in}  \\ 
 14 & $(S_{14}^{10},W_{14}^{10})$  & {\color{red}{11}}
& ${\color{red}{\rm S_{14c}^{10}}}$ \\ 
            &  &  &  \vspace*{-0.15in}  \\ \hline  
            &  &  &  \vspace*{-0.15in}  \\ 
15--18 & $(S_{15}^8,W_{15}^8)$  & 9 & ${\rm S_{kc}^8} $ \\ 
            &  &  &  \vspace*{-0.17in}  \\ 
 19,20 & $(S_{19}^8,W_{19}^9)$  & {\color{green}{10}} 
& ${\color{green}{\rm S_{kc}^9}}  $ \\ 
            &  &  &  \vspace*{-0.17in}  \\ 
 21 & $(S_{21}^{10},W_{21}^{10})$  & {\color{red}{11}}
& ${\color{red}{\rm S_{21c}^{10}}} $ \\ 
            &  &  &  \vspace*{-0.15in}  \\ \hline  
            &  &  &  \vspace*{-0.15in}  \\ 
 22--25 & $(S_{22}^8,W_{22}^8)$  & 9 & ${\rm S_{kc}^8}$ \\ 
 26,27 & $(S_{26}^{9},W_{26}^{9})$  & {\color{green}{10}}
& ${\color{green}{\rm S_{kc}^9}} $ \\ 
            &  &  &  \vspace*{-0.17in}  \\ 
 28 & $(S_{28}^{10},W_{28}^{10})$  & {\color{red}{11}}
& ${\color{red}{\rm S_{28c}^{10}}} $ \\ 
            &  &  &  \vspace*{-0.15in}  \\ \hline  
            &  &  &  \vspace*{-0.15in}  \\ 
29--32 & $(S_{k}^8,W_{k}^8)$  & 9 & $ {\rm S_{kc}^8} $\\ 
 33,34 & $(S_{33}^{9},W_{33}^{9})$  & {\color{green}{10}}
& ${\color{green}{\rm S_{kc}^9}} $ \\ 
            &  &  &  \vspace*{-0.17in}  \\ 
 35 & $(S_{35}^{10},W_{35}^{10})$  & {\color{red}{11}}
& ${\color{red}{\rm S_{35c}^{10}}} $  \vspace*{-0.15in} \\ 
& & & \\ \hline
\end{tabular}
\end{table}

\subsection{Higher-order analysis for Case (B)} 

We now turn to Case (B) for which we choose $a_{122} \!=\! 0$ at 
$\varepsilon^6$ order. 
Thus, the results starting from $\varepsilon^6$ order 
are different from those given in Table~\ref{table1}. 
Now under the condition $a_{122} \!=\! 0$, together with the 
conditions obtained in previous 
orders, the equilibrium ${\rm E_0}$ becomes a center up to 
 $\varepsilon^6$ order. Then for $\varepsilon^7$-order  
focus values we solve $W_7^8 =0$ to obtain $S_7^8$ and then 
$$ 
V_{97} = G_1 \, A_{037}, \ \  
V_{107} = G_2 \, A_{037}, \ \  
V_{117} = G_3 \, A_{037},  \ \ {\det}_7^8 = F_1, 
$$ 
where 
$$
A_{037} = 
a_{037} \!+\! \textstyle\frac{35}{768} a_{211} 
\big[ a (a^2 \!-\! 4) a_{123} a_{211}^3
\!+\! 16 a_{124} a_{211}^2 \!+\! 16 a_{123} (4 a_{123} \!+\! 
3 a_{212} a_{211}) \big], 
$$ 
which shows that for Case (B) only $9$ small limit cycles can be obtained from 
$\varepsilon^7$-order. 
Then, solving $A_{037} \!=\! 0$ gives a unique solution for $a_{037}$, 
under which, together with the conditions obtained in the previous 
orders as well as $S_7^8$ and $b_{017}$, 
the equilibrium ${\rm E_0}$ becomes a center up to 
$\varepsilon^7$ order.  

Next, the $\varepsilon^8$-order analysis shows that $10$ limit cycles 
can be obtained by solving $W_8^9 =0$ to have the solution $S_8^9$, 
under which higher-order focus values become 
\begin{equation}\label{Eqn53} 
V_{108} = \textstyle\frac{9}{8} G_4 a_{211}^2 a_{123}^2, \ \  
V_{118} = \textstyle\frac{9}{8} G_5 a_{211}^2 a_{123}^2, \ \  
V_{128} = \textstyle\frac{9}{8} G_6 a_{211}^2 a_{123}^2, \ \  
{\det}_8^9 = F_2. 
\end{equation} 
This clearly indicates that either $ a_{211} \!=\! 0$ or $a_{123} \!=\! 0$, 
together with $b_{018}$,  
leads to the equilibrium ${\rm E_0}$ being a center up to 
$\varepsilon^8$ order. 
If taking $ a_{211} \!=\! 0$, then we again obtain $10$ small limit cycles 
from $\varepsilon^9$ order by solving $W_9^9 =0$ to obtain the solution 
$S_9^9$ and 
$$ 
V_{109} = G_4\, a_{123}^3, \quad   
V_{119} = G_5\, a_{123}^3, \quad   
V_{129} = G_6\, a_{123}^3, \quad  {\det}_9^9 = F_2. 
$$ 
Thus, for the equilibrium ${\rm E_0}$ being a center up to 
$\varepsilon^9$ order, $a_{123}$ must be taken zero (with 
$b_{019}$), yielding the same result as that generated from Case (A) at 
order $9$ (and so the result at order $8$ also becomes same). 
In other words, choosing the non-generic solution $a_{211} \!=\! 0$ 
at order $8$ makes the higher-order solutions ($k \ge 9$) follow the 
generic pattern of Case (A). 

Now we consider the choice $a_{123} \!=\! 0$ at $\varepsilon^8$ order. 
It can be shown that under this condition only $9$ limit cycles 
exist for $\varepsilon^9$ order. Then for the $\varepsilon^{10}$ order, 
we solve $W_{10}^9=0$ to obtain the solution $S_{10}^9$ and then get 
\begin{equation}\label{Eqn54} 
\begin{array}{ll} 
V_{1010} = \!\!\!\! & \textstyle\frac{9}{8} \, G_4\, 
a_{211}^2 \big(a_{124}^2- \frac{5}{16} a_{211}^4 a_{124}+ \frac{429}{40960} 
 a_{211}^8 \big), \\[1.0ex] 
V_{1110} = \!\!\!\! & \textstyle\frac{9}{8} \, G_5\,
a_{211}^2 \big(a_{124}^2- \frac{5}{16} a_{211}^4 a_{124}+ \frac{429}{40960}
 a_{211}^8 \big), \\[1.0ex]
V_{1210} = \!\!\!\! & \textstyle\frac{9}{8} \, G_6\,
a_{211}^2 \big(a_{124}^2- \frac{5}{16} a_{211}^4 a_{124}+ \frac{429}{40960}
 a_{211}^8 \big), \quad {\det}_{10}^9 = F_2, 
\end{array} 
\end{equation} 
which gives two solutions leading to a center at ${\rm E_0}$, one of them 
is $a_{211} \!=\! 0$, which yields the same solution as that 
obtained in Case (A) at order $10$. Thus, choosing the non-generic solution 
$a_{211} \!=\! 0$ at this order leads to the generic pattern of Case (A) 
starting from  $\varepsilon^{11}$ order (i.e., for $k \ge 11$). 
The second solution, given by 
\begin{equation}\label{Eqn55} 
a_{124} =\textstyle\frac{1}{64} \left(10 \pm \frac{1}{10} \sqrt{5710} 
\right) a_{211}^4, 
\end{equation}  
is a generic solution for Case (B), different from the 
generic pattern of Case (A).  
Then, following a similar computation procedure as that used in 
Case (A), we obtain the generic solutions up to $\varepsilon^{39}$ order. 
The results are given in Table~\ref{table2}, showing a 
$9^6$-$10^6$-$11^1$ generic pattern, starting from order $14$. 
The notations used in this table are the same as that used in 
Table~\ref{table1}.
For each $k$, the key coefficient used to obtain the center condition is 
give below. 
$$ 
\begin{array}{lll} 
{\rm 9 \ LC:} & k=7 & a_{037}= - \frac{35}{768} a_{211} \big[
64 a_{123}^2
-a_{211} (15 a_{123} a_{211}^2 \\[0.5ex] 
& & \qquad \qquad \qquad \quad 
-16 a_{124} a_{211} -48 a_{123} a_{212}) \big] \\[0.5ex]   
& k=9  & a_{039}= \cdots  
\\ [0.5ex]  
& k=14 & a_{0314}= - \frac{35}{48} a_{212}^3 a_{128} \\  [0.5ex] 
& k=15 & a_{0315}= - \frac{35}{48} a_{212}^2 (a_{129} a_{212} 
+3 a_{128} a_{213})  \\  [0.5ex] 
& k=16 & a_{0316}=  - \frac{35}{768} a_{212} 
\big[48 a_{213}^2 a_{128}  
+a_{212} ( 16 a_{1210} a_{212} \\[0.5ex] 
& & \qquad \qquad  
+48 a_{129} a_{213}
+48 a_{128} a_{214}
- 15 a_{128} a_{212}^2 
) \big] \\  [0.5ex] 
& k=17$--$19 & a_{03k}= \cdots \\  [0.5ex] 
& k=27$--$32 & a_{03k}= \cdots \\  [1.5ex] 
{\rm 10 \ LC:} & k = 8 & a_{123} = 0 \\[0.5ex] 
& k=10  &  a_{124} = \frac{100 \pm \sqrt{5710}}{640}\, a_{211}^4  \\ [0.5ex] 
& k=11  &  a_{125} = \frac{100 \pm \sqrt{5710}}{10240} \, 
a_{211}^3 \big[ 64 a_{212} + a ( a^2-4) a_{211}^2 \big] \\ [0.5ex] 
& k=12  &  a_{126} = \cdots 
\\ [0.5ex] 
& k=20  &  a_{128} = \frac{100 \pm \sqrt{5710}}{640} \, a_{212}^4 \\ [0.5ex] 
& k=21  &  a_{129} = \frac{100 \pm \sqrt{5710}}{160} \, a_{212}^3 a_{213} 
\\ [0.5ex] 
& k=22$--$25  &  a_{12(k-12)} = \cdots \\ [0.5ex] 
\end{array}
$$
$$  
\begin{array}{lll} 
& k=33  &  a_{1215} = - \frac{100 \pm \sqrt{5710}}{10240} a_{213}  
 \big[a_{213}^2 (15 a_{213}^2 -64 a_{216}) \\[0.5ex] 
& & \qquad \qquad \qquad \qquad \qquad  
- 64 a_{214} ( a_{214}^2 +3 a_{213} a_{215}) \big]
\\ [0.5ex] 
& k=34$--$38 &  a_{12(k-18)} = \cdots \\[1.5ex] 
{\rm 11 \ LC:} & k=13m &  \\  
&  m= 1,2,3 & a_{21m} = 0   
\end{array}
$$

\begin{table}[!h]
\caption{Bifurcation of limit cycles for generic Case (B)}  
\centering 
\vspace*{0.15in} 
\label{table2}
\begin{tabular}{|c|c|c|c|}
\hline
$k$ & ($S_k^m,W_k^m$) & LC & Condition for Center   \\
\hline
 7 & $(S_7^{8},W_7^{8})$  & 9 & ${\rm S_{7c}^8} $ \\ 
 8 & $(S_8^9,W_8^9)$  & {\color{green}{10}} 
& ${\color{green}{\rm S_{8c}^9}} $  \\  
 9 & $(S_9^{8},W_9^{8})$  & 9 & ${\rm S_{9c}^8} $ \\ 
 10--12 & $(S_{10}^9,W_{10}^9)$  & {\color{green}{10}} 
& $ {\color{green}{\rm S_{kc}^9}} $\\  
            &  &  &  \vspace*{-0.17in}  \\ 
 13 & $(S_{13}^{10},W_{13}^{10})$ & {\color{red}{11}} 
& ${\color{red}{\rm S_{13c}^{10}}} $ \\ 
            &  &  &  \vspace*{-0.15in}  \\ \hline  
            &  &  &  \vspace*{-0.15in}  \\ 
 14--19 & $(S_{k}^{8},W_{k}^{8})$  & 9 & ${\rm S_{kc}^8} $ \\ 
 20--25 & $(S_{20}^9,W_{20}^9)$  & {\color{green}{10}} 
& ${\color{green}{\rm S_{kc}^9}} $ \\
            &  &  &  \vspace*{-0.17in}  \\ 
 26 & $(S_{26}^{10},W_{26}^{10})$ & {\color{red}{11}} 
& ${\color{red}{\rm S_{26c}^{10}}} $ \\ 
            &  &  &  \vspace*{-0.15in}  \\ \hline  
            &  &  &  \vspace*{-0.15in}  \\ 
 27--32 & $(S_{k}^{8},W_{k}^{8})$  & 9 & ${\rm S_{kc}^8} $ \\ 
 33--38 & $(S_{k}^{9},W_{k}^{9})$  & {\color{green}{10}} 
& $ {\color{green}{\rm S_{kc}^9}} $ \\ 
 39 & $(S_{26}^{10},W_{26}^{10})$ & {\color{red}{11}} 
&  ${\color{red}{\rm S_{39c}^{10}}} $ 
\vspace*{-0.15in} \\ 
& & & \\ \hline
\end{tabular}
\end{table}

\subsection{Non-generic solutions} 

Couple of non-generic solutions have been discussed in Case (B) 
(see Section 3.5), showing that setting $a_{211} \!=\! 0$ at order 
$8$ or $10$ (see Eqns.~\eqref{Eqn53} and \eqref{Eqn54})
leads to the $9^4$-$10^2$-$11^1$ generic pattern of Case (A) 
for orders greater than $10$ or $11$.
These two examples give a route from Case (B) to Case (A).  
In this section, we present several more non-generic solutions to 
show other possibilities that they eventually return to either 
the $9^4$-$10^2$-$11^1$ generic pattern of Case (A) or 
$9^6$-$10^6$-$11^1$ generic pattern of Case (B). 
Other cases can be similarly discussed. 
Since the discussions for different cases are similar, 
we will not give the details but list the cases below and 
summarize the results in Table~\ref{table3}. 

\begin{enumerate} 
\item[{(A1)}]  
In Case (A), at order $13$: $a_{212}=0$, leading to Case (A). 

\item[{(A2)}]  
In Case (A), at order $12$: $a_{124}=0$, leading to Case (B). 

\item[{(B1)}]  
In Case (B), at order $11$: $a_{211}=0$, leading to Case (B).  

\end{enumerate}

For each $k$, the key coefficient used to obtain the center condition is 
given below. 
$$ 
\!\! \begin{array}{lll} 
{\rm Case \ (A1) } & k=13 & a_{212}= 0 \\  
& k=14  & a_{0314}= -\frac{35 a_{125}}{48} \big[ 4 a_{125} a_{214} 
\!+\! a_{213} (8 a_{126} \!+\! a_{213}^2) \big]  \hspace*{0.8in} \\ [0.5ex]  
& k=16 & a_{0316}= - \frac{35}{48} \big[ a_{127} a_{213} 
(8 a_{126}+a_{213}^2) \\[0.5ex] 
& & \qquad \qquad \quad +a_{126} a_{214} (4 a_{126} +3 a_{213}^2)\big]  
\\  [0.5ex] 
& k=17 & a_{0317}= -\frac{35}{48} a_{128} a_{213} (a_{213}^2+8 a_{126}) 
\\[0.5ex] 
& & \qquad -\frac{35}{48} a_{127} (4 a_{213} a_{127} 
\!+\! 8 a_{214} a_{126} \!+\! 3 a_{214}a_{213}^2)  \\[0.5ex] 
&  & \qquad - \frac{35}{48} a_{126} (3 a_{214}^2 a_{213} 
\!+\! 4 a_{126} a_{215} \!+\! 3 a_{215} a_{213}^2) \\  [0.5ex] 
& k=15 & a_{125}= 0 \\  [0.5ex] 
& k=18 & a_{126}= -\frac{9}{8}\, a_{213}^2  
\end{array} 
$$
$$ 
\!\! \begin{array}{lll} 
{\rm Case \ (B1)} & k=11 & a_{211}= 0 \\[0.5ex] 
& k=12 & a_{0312} \!=\! -\frac{35}{48} a_{212} 
(a_{126} a_{212}^2 \!+\! 3 a_{125} a_{212} a_{213} \!+\! 4a_{125}^2) 
 \\[0.5ex] 
& k=13,15,17 & a_{03k} = \cdots  \\[0.5ex] 
& k=14,16,18 & a_{12(k/2-2)} =  0   \\ [1.5ex] 
{\rm Case \ (A2)} & k=13 & 
a_{0313} \!=\! -\frac{35}{48} a_{212}\big[ a_{127} a_{212}^2 
\!+\!  a_{126} (3 a_{213} a_{212} \!+\! 8 a_{125}) \big] \\[0.5ex] 
& & \qquad \quad +\frac{35}{768} a_{125} 
( 15 a_{212}^4 -48 a_{212}^2 a_{214} 
\\[0.5ex] 
& & \qquad \qquad \qquad \quad 
-48 a_{212} a_{213}^2 -64 a_{125} a_{213} ) \\[0.5ex] 
& k=15,17 & a_{03k} = \cdots \\[0.5ex] 
& k=12,14,16,18 & a_{12(k/2-2)} = 0  
\end{array}
$$

\begin{table}[!h]
\caption{Non-generic solutions}  
\centering 
\vspace*{0.15in} 
\label{table3}
\begin{tabular}{|c|c|c|c|c|}
\hline
\!\!\! Case \!\!\! &$k$ & ($S_k^m,W_k^m$) & LC & Condition for Center   \\
\hline
& 13,15 & $(S_{13}^9,W_{13}^9)$  & {\color{green}{10}} 
& ${\color{green}{\rm S_{kc}^9}} $  \\  
{\rm (A1)} & 14,16,17 & $(S_{14}^{8},W_{14}^{8})$  & 9 
& ${\rm S_{kc}^8} $ \\ 
& 18 & $(S_k^9,W_k^9)$  & {\color{green}{10}} 
& ${\color{green}{\rm S_{18c}^9}} \ \Longrightarrow \ $ 
{\color{red}{generic Case (A)}}  \\  
&            &  &  &  \vspace*{-0.15in}  \\ \hline  
&           &  &  &  \vspace*{-0.15in}  \\ 
& 11,14,16 & $(S_{11}^{9},W_{11}^{9})$  & {\color{green}{10}} 
& ${\color{green}{\rm S_{kc}^9}} $ \\ 
{\rm (B1)} & \!\!\! 12,13,15,17 \!\!\! & $(S_{k}^8,W_{k}^8)$  & 9 & ${\rm S_{kc}^8} $ \\
& 18 & $(S_{18}^{9},W_{18}^{9})$ & {\color{green}{10}} 
& ${\color{green}{{\rm S_{18c}^9}}} \ \Longrightarrow \ $ 
{\color{red}{generic Case (B)}}  \\ 
&            &  &  &  \vspace*{-0.15in}  \\ \hline  
&            &  &  &  \vspace*{-0.15in}  \\ 
&\!\!\! 12,14,16 \!\!\! & $(S_{k}^{9},W_{k}^{9})$  & {\color{green}{10}} 
& $ {\color{green}{\rm S_{kc}^9}} $ \\ 
{\rm (A2)} & \!\!\! 13,15,17 \!\!\! & $(S_{k}^{8},W_{k}^{8})$  & 9 & ${\rm S_{kc}^8}  $ \\ 
&  18 & $(S_{18}^{9},W_{18}^{9})$ & {\color{green}{10}} 
& ${\color{green}{\rm S_{18c}^9}} \ \Longrightarrow \ $ 
{\color{red}{generic Case (B)}}  \vspace*{-0.15in} \\ 
& & & & \\ \hline
\end{tabular}
\end{table}

Therefore, there are four possible routes for the non-generic solutions: 
from Case (A) to Case (A) or Case (B); and 
from Case (B) to Case (A) or Case (B).

\subsection{Summary of this section} 

Summarizing the results obtained in sections 
3.4, 3.5 and 3.6, we have the following theorem. 

\begin{theorem}\label{Thm8}  
For system \eqref{Eqn11}, based on the higher-order focus values, 
there exist two generic patterns: One is $9^4$-$10^2$-$11^1$ pattern 
starting from order $8$ with four consecutive $9$ limit cycles, 
followed by two consecutive $10$ limit cycles, and then 
one $11$ limit cycles up to $\varepsilon^{35}$ order; 
and the other is $9^6$-$10^6$-$11^1$ pattern, 
starting from order $14$ with six consecutive $9$ limit cycles, 
followed by six consecutive $10$ limit cycles, and then 
one $11$ limit cycles up to $\varepsilon^{39}$ order. 
Other non-generic solutions deviate from the current pattern for 
certain orders and eventually return to either 
the $9^4$-$10^2$-$11^1$ pattern or the $9^6$-$10^6$-$11^1$ pattern.   
\end{theorem}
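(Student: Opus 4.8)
The plan is to argue by induction on the perturbation order $k$, following the scheme already executed in Sections~3.1--3.6 and organising the bookkeeping so that each order reduces to a finite computer-algebra computation. Assume the center conditions obtained at orders $1,\dots,k-1$ hold, so that all $\varepsilon^{j}$-order focus values vanish for $1\le j\le k-1$ and hence $\frac{dr}{dt}=O(\varepsilon^{k})$ by the criterion~\eqref{Eqn24}. At order $k$ one computes the focus values $V_{ik}$, which are linear in the current-order parameters $a_{ijk},b_{ijk}$ with coefficients that are polynomials in the lower-order data, using the normal-form algorithm and Maple program of~\cite{Yu1998} after shifting the perturbed equilibrium to the origin. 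The structural fact to be verified order by order is that there is an integer $m\in\{8,9,10\}$---depending on the order and on the branches selected at previous orders---such that the first $m$ of these focus values, grouped as $W_k^m$, can be solved for a distinguished $m$-tuple $S_k^m$ of the parameters with Jacobian determinant $\det_k^m$ equal to one of the nonvanishing rational functions $F_1,F_2,F_3$ of~\eqref{Eqn27}; combined with $V_{0k}=0$ (which fixes $b_{01k}$), this produces $m+1$ small limit cycles near the resulting critical point by the displacement-function argument behind~\eqref{Eqn8}.

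On the solution set $W_k^m=0$ the remaining focus values $V_{m+1,k},V_{m+2,k},V_{m+3,k}$ reduce, as the sample computations~\eqref{Eqn30}, \eqref{Eqn37}, \eqref{Eqn44}, \eqref{Eqn47} illustrate, to a common scalar factor---an explicit polynomial in the parameters such as $A_{03k}$, or $a_{122}^2\bigl(a_{122}+\tfrac{9}{8} a_{211}^2\bigr)$, or $a_{211}^7$---times the fixed nonzero functions $G_i$ of~\eqref{Eqn27}. Hence the maximal number of limit cycles produced at order $k$ is $m+1$, and the vanishing of that common scalar factor is necessary and sufficient for all $\varepsilon^{k}$-order focus values to vanish. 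Solving the scalar factor $=0$ for the last key coefficient---the explicit list is tabulated just before Tables~\ref{table1}--\ref{table3}---gives the center condition $\mathrm{S}_{kc}^m$. Sufficiency, i.e.\ that \emph{all} $V_{ik}$ with $i\ge m+1$ vanish under $\mathrm{S}_{kc}^m$, is established by exhibiting an $\varepsilon^{k}$-order rational approximate first integral $H_k(x,y,\varepsilon)$ of the form displayed in~\eqref{Eqn32}, \eqref{Eqn36}, \eqref{Eqn39}, built recursively from the lower-order pieces so that~\eqref{Eqn24} holds with $O(\varepsilon^{k+1})$; this is the step flagged as the tedious one in Remark~\ref{Rem2}.

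Iterating this procedure and always choosing a nonzero branch of the scalar factor when one exists yields the two \emph{generic} sequences of $m$-values: beginning at order $8$ the periodic pattern $m=8,8,8,8,9,9,10$ of period $7$, which gives the limit-cycle counts $9^4$-$10^2$-$11^1$ and which we verify to recur up to $\varepsilon^{35}$ order; and, after the branch $a_{122}=0$ at order $6$ that defines Case~(B) together with the branch~\eqref{Eqn55} at order $10$, a periodic pattern of period $13$ giving $9^6$-$10^6$-$11^1$ up to $\varepsilon^{39}$ order. The non-generic branches (A1), (A2), (B1) are treated by the same order-by-order analysis: over the finitely many orders during which the counts deviate, one checks that the coefficients forced to vanish steer the solution back onto one of the two generic families, as recorded in Table~\ref{table3}. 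Remark~\ref{Rem3} justifies the substitution $a=-3$ in the high-order computations ($k\ge15$) without affecting any of the counts.

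The main obstacle is computational rather than conceptual. The focus values at orders in the thirties are enormous, and even after the parameter-reduction transformation of Section~2 and the simplification $a=-3$, computing the $V_{ik}$, solving the polynomial systems $W_k^m=0$, and---hardest of all---constructing the approximate first integrals $H_k$ order by order stretch the available computer-algebra resources. Since the qualitative mechanism (the factorisation of the residual focus values through a single scalar and the branch-selection rule) is stable across orders, the real work is to certify that the pattern never breaks: that $\det_k^m$ stays nonzero, that no new independent relation among the residual focus values appears, and that the recursive construction of $H_k$ succeeds at every order up to $35$ (respectively~$39$).
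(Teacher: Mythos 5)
Your proposal is correct and follows essentially the same route as the paper: order-by-order solution of $W_k^m=0$ with the Jacobians $\det_k^m=F_1,F_2,F_3$, factorization of the residual focus values through a single scalar times the nonzero $G_i$, center conditions verified via the recursive approximate first integrals $H_k$ of \eqref{Eqn24} (the step the paper itself defers in Remark~\ref{Rem2}), and branch-tracking to obtain the $9^4$-$10^2$-$11^1$ and $9^6$-$10^6$-$11^1$ patterns together with the non-generic routes of Table~\ref{table3}. You also correctly identify that, as in the paper, the argument ultimately rests on symbolic computation certified up to orders $35$ and $39$ rather than on a closed-form inductive proof.
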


Finally, we propose a conjecture on the number of limit cycles 
around ${\rm E_0}$ for system \eqref{Eqn11}. 

\vspace*{0.10in} 
\noindent 
{\bf Conjecture.} For the perturbed system \eqref{Eqn11}, 
the maximal number of small limit cycles which can bifurcate from the
equilibrium ${\rm E_0}$ is $11$,

\section{Conclusion}

In this paper, we have applied high-order focus value computation to 
prove that system \eqref{Eqn11}
can have $11$ limit cycles around the equilibrium of 
\eqref{Eqn11}, obtained by 
perturbing at least $\varepsilon^7$-order focus values. 
Moreover, no more than $11$ limit cycles can be found up to 
$\varepsilon^{39}$-order analysis. It is believed that 
system \eqref{Eqn11} can have maximal $11$ small limit cycles 
around the equilibrium.

\section*{Acknowledgment}
This work was supported by the National Natural Science Foundation of 
China (NSFC No.~11501370), and the  
Engineering Research Council of Canada (NSERC No.~R2686A02).





\section*{Appendix A} 

The coefficients $C_i$'s
in \eqref{Eqn31}, \eqref{Eqn35} and \eqref{Eqn38} are given 
below. 
$$
\!\!\! \begin{array}{rll} 
C_1 = &\!\!\! - \frac{3}{16} (3 a^4 + 4 a^2 +44) & 
C_2 = -\, \frac{a}{48} (3 a^4+12 a^2 + 116) \\[1.0ex] 
C_3 = &\!\!\! -\, \frac{1}{8} (a^4 +2 a^2 +5) & 
C_4 = -\, \frac{1}{16} (9 a^4 - 20 a^2 + 172) \\[1.0ex] 
C_5 = &\!\!\! -\, \frac{3\,a}{8} (3 a^2 -8 )  & 
C_6 = -\,\frac{a}{12} ( a^2 -120 )  \\[1.0ex] 
C_7 = &\!\!\! - \frac{1}{8} (3 a^2 -80)  & 
C_8 = \frac{3}{128} a (7a^4+68a^2-900) \\[1.0ex]
C_9 = & \!\!\! \frac{1}{64}(3 a^6 \!+\! 40 a^4 \!-\! 860 a^2 \!-\! 1600) &  
C_{10} = -\, \textstyle\frac{a}{576} ( 303 a^4 + 1596 a^2 + 8096) \\[1.0ex] 
C_{11} = & \!\!\! - \,\frac{a}{64} (55 a^2 - 256) & 
C_{12} = -\, \frac{1}{384} (569 a^4-1660 a^2+1420) \\[1.0ex] 
C_{13} = &\!\!\! \frac{1}{192} (21 a^6 \!+\! 80 a^4 \!-\! 2996 a^2 \!-\!9040) & 
C_{14} = \frac{3}{64} (7 a^4 + 8 a^2 + 160) \\[1.0ex] 
C_{15} = &\!\!\! \frac{a}{128} (49 a^4 + 220 a^2 - 380) & 
C_{16} = \frac{a}{32} (9a^4+36a^2-172) \\[1.0ex] 
C_{17} = &\!\!\! \frac{3}{64} (15 a^4+28 a^2-916) & 
C_{18} = -\, \textstyle\frac{1}{16} (3 a^4 + 4 a^2 + 340) 
\end{array}
$$
$$ 
\!\!\! \begin{array}{rll} 
C_{19} = &\!\!\! -\, \frac{3a}{8} (a^2 - 16) & 
C_{20} = \frac{a}{96}(45 a^4 + 266 a^2 - 644) \\[1.0ex] 
C_{21} = &\!\!\! \frac{a}{32} (41 a^2+72) & 
C_{22} = \frac{1}{192} (303 a^4+1728 a^2-3620) \\[1.0ex] 
C_{23} = &\!\!\! -\frac{9}{512} a ( a^4 + 12 a^2 + 200) & 
C_{24} = -\, \frac{9}{256} (a^4+160) \\[1.0ex] 
C_{25} = & \!\!\! -\, \frac{1}{512} (70 a^6 \!-\! 471 a^4 \!+\! 128 a^2 
\!-\! 300) & 
C_{26} = -\, \frac{9}{512} a (a^4 - 108 a^2 + 696) \\[1.0ex] 
C_{27} = & \!\!\! \frac{3}{256} (21 a^4 + 58 a^2 - 1880) & 
C_{28} = a_{125} a_{212}+a_{124} a_{213} \\[1.0ex] 
C_{29} = & \!\!\! a_{215} a_{212}+a_{214} a_{213} & 
C_{30} = 3 a_{213} a_{212}^2 
\end{array}
$$ 

\section*{Appendix B} 

The coefficients $s_i$'s involved in $H_2$ (see Eq. \eqref{Eqn36}) 
and $t_i$'s involved in $H_3$ (see Eq. \eqref{Eqn39}) are given 
as follows. 
\begin{equation*}
\begin{array}{rl}
s_1 =&\!\!\!\! \frac{1}{3072}a^6(3a^2+16)(10+4y+4x^2+3x^4)
-\frac{1}{192}(2850+1824a^2-85a^4)\\[1ex]
&\!\!\!\!
+\frac{a}{18}(6a^2 \!-\! 319)x
\!-\! \frac{1}{288}(5902 \!+\! 1568a^2 \!+\! 45a^4)y
-\frac{a}{9}(53+10x^2)xy
+\frac{13}{6}y^2
\\[1ex]
&\!\!\!\!
-\frac{1}{72}a^2(3a^2+4)y(y-x^2)
-\frac{1}{96}(1074+200a^2+45a^4)x^2
-\frac{29}{18}x^2y
-\frac{2}{9}y^3
\\[1ex]
&\!\!\!\!+\frac{a}{36}(12-4a^2+3a^4)x^3
-\frac{1}{1152}(6746+3140a^2+91a^4+3a^6)x^4
+\frac{2}{3}x^2y^2 \\[1ex]
s_2 =&\!\!\!\!
\frac{1}{128}a^2(24-10a^2+5a^4)
-\frac{1}{64}(1120-24a^2+ 2a^4 -a^6)(y+x^2)
-\frac{1}{2}x^2y
\\[1ex]
&\!\!\!\!
+\frac{a}{16}(4 \!+\! 5a^2)x
\!-\! \frac{25}{4}x^2
\!-\! \frac{a}{256}(a^2 \!-\! 4)x [32(y \!-\! x^2) \!-\! 3a^3x^3]
\!-\! \frac{1}{8}(73 \!-\! 2a^2 )x^4 \\[1ex]
s_3 =&\!\!\!\! \frac{1}{384}a(-4716+376a^2+25a^4+15a^6)
-\frac{1}{96}(2260-164a^2-15a^4)x\\[1ex]
&\!\!\!\!+\frac{1}{192}a^5(5+3a^2)(y+x^2)
-\frac{1}{12}a(a^2-4)y(y-x^2)
-\frac{2}{3}xy^2
+2x^3y \\[1ex]
&\!\!\!\!
-\frac{1}{48}[a(1151+10a^2)y
-a(1511+40a^2)x^2
+(100-4a^2-3a^4)xy \\[1ex]
&\!\!\!\! +(140 -76a^2 +11a^4)x^3]
-\frac{1}{768}a(10216+100a^2-2a^4-9a^6)x^4  \\[1.0ex] 
s_4 =&\!\!\!\! \frac{1}{9216}a(96656+17952a^2+3640a^4+24a^6+9a^8)\\[1ex]
&\!\!\!\!-\frac{5}{4608}(2256+7272a^2-56a^4+6a^6+9a^8)x
-\frac{5}{72}a(224+8a^2+3a^4)y \\[1ex]
&\!\!\!\! -\frac{5}{144}a(642-8a^2-3a^4)x^2
-\frac{5}{1152}(368-1628a^2-92a^4+3a^6)xy \\[1ex]
&\!\!\!\! -\frac{5}{2}ay^2
+\frac{5}{1152}(5272+86a^4+2964a^2-3a^6)x^3
-\frac{275}{36}ax^2y 
\\[1ex]
&\!\!\!\! 
+\frac{5}{288}(372 +4a^2 +3a^4) xy^2
-\frac{5}{144}a(140+12a^2+3a^4)x^4
\\[1ex]
&\!\!\!\! 
-\frac{5}{288}(268+3a^4+4a^2)x^3y
-\frac{5}{18}xy^3-\frac{25}{18}ax^4y
+\frac{5}{6}x^3y^2 \\[1ex]
s_5 =&\!\!\!\! 
\frac{1}{256}a(-16a^2-8a^4+a^6+1120)
+\frac{5}{256}(560+9a^4-2a^6-4a^2)x\\[1ex]
&\!\!\!\!-\frac{5}{64}a(20-11a^2)x^2
-\frac{5}{128}a^2(3a^2+4)x(y+x^2)
+\frac{175}{8}xy
+\frac{265}{16}x^3\\[1ex]
&\!\!\!\!+\frac{5}{32}a(a^2-4)x^2(y-x^2)
-\frac{5}{8}x^3y \\[1ex]
\end{array}
\end{equation*}
\begin{equation*}
\begin{array}{rl}
s_6 =&\!\!\!\! 
\frac{1}{768}a^2(540a^2 \!+\! 3904 \!-\! 8a^4 \!+\! 3a^6)
-\frac{5}{1536}a(484 \!-\! 100a^2 \!-\! 23a^4 \!+\! 12a^6)x\\[1ex]
&\!\!\!\!-\frac{5}{12}a^2(a^2 \!-\! 1)y
\!-\! \frac{35}{384}(244 \!-\! 4a^2 \!-\! 7a^4)x^2
\!+\! \frac{5}{768}a(5132 \!+\! 84a^2 \!-\! 13a^4)xy\\[1ex]
&\!\!\!\!-\frac{5}{768}a(-3756-76a^2+13a^4)x^3
+\frac{5}{192}(380+3a^4+4a^2)x^2y
\\[1ex]
&\!\!\!\! 
+\frac{5}{48}a(a^3-4)xy(y-x^2)
-\frac{5}{192}(11a^4+20a^2+12)x^4
-\frac{5}{6}x^2y^2+\frac{5}{2}x^4y.
\end{array}
\end{equation*}
\begin{equation*}
\begin{array}{rl}
t_1 =&\!\!\!\!
\frac{1}{384}a(-4716+376a^2+25a^4+15a^6)
-\frac{1}{96}(2260-164a^2-15a^4)x
\\[1ex]
&\!\!\!\!
+\frac{1}{192}a^5(5 +3a^2)(y+x^2)
-\frac{1}{48}a( 1151 +10a^2 )y
-\frac{2}{3}xy^2
+2x^3y
\\[1ex]
&\!\!\!\!
-\frac{1}{48}a( 1511 \!+\! 40a^2 )x^2
+\frac{1}{48}(100 \!-\! 4a^2 \!-\! 3a^4)xy
-\frac{1}{12}a(a^2-4)y(y-x^2)
\\[1ex]
&\!\!\!\!
+\frac{1}{48}( 140 -76a^2 +11a^4)x^3
+\frac{1}{768}a(-100a^2-10216+9a^6+2a^4)x^4 \\[1ex]
t_2 =&\!\!\!\! 
\frac{1}{64}a^2( 24 -10a^2 +5a^4)
-\frac{1}{32}(1120 -24a^2 +2a^4 -a^6)(y+x^2)
\\[1ex]
&\!\!\!\!
+\frac{1}{8}a(5a^2+4)x
-\frac{25}{2}x^2
-\frac{1}{4}a(a^2-4)x(y-x^2)
-x^2y
\\[1ex]
&\!\!\!\!
-\frac{1}{128}(2336 -64a^2 +12a^4 -3a^6)x^4 \\[1ex]
t_3 =&\!\!\!\!
-\frac{1}{2048}a^5(20+a^4)(5+2y+2x^2+2x^4)
-\frac{1}{128}a(550-343a^2)
\\[1ex]
&\!\!\!\!
-\frac{1}{256}(6800 \!-\! 24a^2 \!-\! 10a^4 \!+\! 5a^6)x
-\frac{3}{64}a(25a^2 \!+\! 34)y
-\frac{1}{32}a(45a^2 \!+\! 61)x^2
\\[1ex]
&\!\!\!\!
-\frac{1}{1024}a^4(2-a^2)x(8y-8x^2+3ax^3)
-\frac{3}{16}(50-a^2)xy
+\frac{3}{16}(80-a^2)x^3
\\[1ex]
&\!\!\!\!
+\frac{1}{32}a(a^2-4)x^2y
-\frac{1}{64}a( 197 -2a^2 )x^4
+\frac{1}{4}x^3y \\[1ex]
t_4 =&\!\!\!\!
\frac{1}{768}a^2(3904 \!+\! 540a^2 \!-\! 8a^4 \!+\! 3a^6)
-\frac{5}{1536}a(484 \!-\! 100a^2 \!-\! 23a^4 \!+\! 12a^6)x
\\[1ex]
&\!\!\!\!
-\frac{5}{12}a^2(a^2-1)y
-\frac{35}{384}(244-7a^4-4a^2)x^2
\\[1ex]
&\!\!\!\!
-\frac{5}{768}a(-5132-84a^2+13a^4)xy
-\frac{5}{768}a(-3756-76a^2+13a^4)x^3
\\[1ex]
\end{array}
\end{equation*}
\begin{equation*}
\begin{array}{rl}
&\!\!\!\!
+\frac{5}{192}(380+4a^2+3a^4)x^2y
+\frac{5}{48}a(a^2-4)xy(y-x^2)
\\[1ex]
&\!\!\!\!
-\frac{5}{192}(12+20a^2+11a^4)x^4
-\frac{5}{6}x^2y^2
+\frac{5}{2}x^4y \\[1ex]
t_5 =&\!\!\!\!
\frac{1}{128}a(1120-16a^2-8a^4+a^6)
+\frac{5}{128}(560-4a^2+9a^4-2a^6)x
\\[1ex]
&\!\!\!\!
+\frac{5}{32}a(11a^2-20)x^2
-\frac{5}{64}a^2(4+3a^2)x(y+x^2)
+\frac{175}{4}xy
\\[1ex]
&\!\!\!\!
+\frac{265}{8}x^3
+\frac{5}{16}a(a^2-4)x^2(y-x^2)
-\frac{5}{4}x^3y \\[1ex]
t_6 =&\!\!\!\!
-\frac{1}{2048}a^2(2544 \!-\! 2336a^2 \!+\! 8a^4 \!+\! 3a^6)
-\frac{5}{2048}a(2632 \!+\! 276a^2 \!+\! 3a^4 \!-\! 8a^6)x
\\[1ex]
&\!\!\!\!
+\frac{5}{512}(1240-4a^2-41a^4+4a^6)x^2
+\frac{5}{1024}a(-1832+892a^2+a^4)xy
\\[1ex]
&\!\!\!\!
+\frac{5}{1024}a(-1368+696a^2+a^4)x^3
-\frac{5}{256}a^2(3a^2+4)x^2(y-x^2)
\\[1ex]
&\!\!\!\!
+\frac{375}{32}x^2y
-\frac{65}{16}x^4
-\frac{5}{128}a(a^2-4)x^3y
+\frac{5}{16}x^4y
\end{array}
\end{equation*}

\end{document}